\documentclass[nointlimits,11pt,oneside]{amsart}
\usepackage[numbers]{natbib}
\usepackage{amssymb,cases,enumitem}
\usepackage{color}
\usepackage{hyperref}

\usepackage[%
	a4paper,
	total={16cm,23cm},
	left=3cm, top=3cm,
	marginparsep=2pt]
{geometry}

\theoremstyle{plain}
\newtheorem{theorem}{Theorem}[section]
\newtheorem{lemma}[theorem]{Lemma}
\newtheorem{corollary}[theorem]{Corollary}
\newtheorem{proposition}[theorem]{Proposition}
\theoremstyle{definition}

\newtheorem{definition}[theorem]{Definition}

\numberwithin{equation}{section}

\def\L1loc{L^1_{\text{loc}}}

\hyphenation{re-ar-ran-ge-ment-in-va-ri-ant}
\hyphenation{re-ar-ran-ge-ment}

\begin{document}

\title{On the properties of quasi-Banach function spaces}
\author{Ale\v s Nekvinda and Dalimil Pe\v sa}

\address{Ale\v s Nekvinda, Department of Mathematics, Faculty of Civil Engineereng,
	Czech Technical University, Th\' akurova 7, 16629 Prague~6,
	Czech Republic}
\email{ales.nekvinda@cvut.cz}
\urladdr{0000-0001-6303-5882}

\address{Dalimil Pe{\v s}a, Department of Mathematical Analysis, Faculty of Mathematics and
	Physics, Charles University, Sokolovsk\'a~83,
	186~75 Praha~8, Czech Republic}
\email{pesa@karlin.mff.cuni.cz}
\urladdr{0000-0001-6638-0913}

\subjclass[2010]{46A16, 46E30}
\keywords{quasi-Banach function space, generalised Riesz-Fischer theorem, quasinorm, quasi-Banach space, separability, dilation operator}

\thanks{The research of both authors was supported by the grant no. P201-18-00580S of the Grant Agency of the Czech Republic; the second author was further supported by the grant SVV-2020-260583.}

\begin{abstract}
In this paper we explore some basic properties of quasi-Banach function spaces which are important in applications. Namely, we show that they possess a generalised version of Riesz--Fischer property, that embeddings between them are always continuous and that the dilation operator is bounded on them. We also provide a characterisation of separability for quasi-Banach function spaces over the Euclidean space. Furthermore, we extend the classical Riesz--Fischer theorem to the context of quasinormed spaces and, as a consequence, obtain an alternative proof of completeness of quasi-Banach function spaces.
\end{abstract}

\date{\today}

\maketitle

\makeatletter
   \providecommand\@dotsep{2}
\makeatother

\section{Introduction}

While the theory of Banach function spaces has been extensively developed since their introduction by Luxemburg in \cite{Luxemburg55} (see \cite{BennettSharpley88} for an exhaustive treatment), their quasinormed counterparts remain surprisingly unexplored to the point where even the most fundamental results are missing. For example, even the most natural question of completeness has been answered only recently by Caetano, Gogatishvili and Opic in \cite{CaetanoGogatishvili16}. This lack of underlying theory constitutes a great obstacle in the field of function spaces, given that many natural and useful function spaces happen to be quasinormed while retaining many, if not all, of the remaining properties of Banach function norms. Examples of such spaces include Lebesgue spaces $L^p$ with $p<1$, Lorentz spaces $L^{p,q}$ with $q<1$, many cases of Lorentz-Karamata spaces (as introduced in \cite{EdmundsKerman00}), numerous cases of classical Lorentz spaces, and generalised Orlicz spaces $L_{\Phi}$ with a non-convex function $\Phi$. Further examples are the Lorentz spaces $L^{1, q}$, $q>1$, chief among them being the space $L^{1, \infty}$, also known as the weak $L^1$; the importance of this space stems from the fact that it captures the endpoint behaviour of many important operators of harmonic analysis, including the Hardy--Littlewood maximal operator, singular integral operators, and fractional integral operators.

It comes at no surprise that, thanks to their versatility and wide field of applications, quasi-Banach function spaces themselves attracted some attention in the recent years. For example, Newtonian spaces based on them were studied in \cite{MalyL12} and \cite{MalyL16}; Hardy spaces associated with them were the topic of \cite{SawanoHo17}, \cite{SunYang22}, \cite{WangYang20}, and \cite{WangYang23}; their interaction with a range of other concepts, including behaviour of some operators, was investigated in \cite{CaetanoGogatishvili16}, \cite{GuoZhao20}, \cite{HaleNaibo23}, \cite{Ho19}, \cite{Ho20}, \cite{Ho23}, \cite{Kolwicz18}, \cite{KolwiczLesnik19}, \cite{Nieraeth23}, and \cite{PanYang23}; and the paper \cite{LoristNieraeth23} covers some of their properties and alternative approaches to their definition. 

Another example are the spaces $X_{\alpha, \beta}$ introduced and studied in \cite{Ho16} and \cite{Ho20-Interpolation}, that are defined for any rearrangement invariant quasi-Banach function space $X$ over $((0, \infty), \lambda)$ and any pair of numbers $\alpha \in \mathbb{R}$ and $\beta \in [0, \infty)$ through the functional
\begin{equation*}
	\lVert f \rVert_{X_{\alpha, \beta}} = \lVert t^{-\frac{\alpha}{n}} f^*(t^{\beta}) \rVert_X,
\end{equation*}
where $f^*$ is the non-increasing rearrangement of $f$ (see Definition~\ref{DNIR} or \cite[Chapter~2, Definition~1.5]{BennettSharpley88}). Said spaces were introduced for the purpose of extending and generalising the Boyd's interpolation theorem to rearrangement invariant quasi-Banach function spaces and operators that are bounded from $L^p$ to $L^q$ with $p \neq q$.

Moreover, in connection with the investigation of traces of Sobolev functions as a particular case of upper Ahlfors measures, it was recently discovered in \cite{CianchiPick20} and \cite{CianchiPick20B} that it is very useful to work with scales of function spaces of the form $X^{\langle \alpha \rangle}$, studied later in \cite{Turcinova23}, where $X$ is a rearrangement-invariant Banach function space, $\alpha \in (0, \infty)$, and the space is defined through the functional 
\begin{equation*}
	\lVert u \rVert_{X^{\langle\alpha\rangle}} = \lVert (|u|^\alpha)^{**}(t)^{\frac{1}{\alpha}} \rVert_{\overline{X}(0,\mu(R))},
\end{equation*}
where $\overline{X}(0,\mu(R))$ is the representation space of $X$ and $f^{**}$ is the elementary maximal function of $f$ (see \cite[Chapter~2, Theorem~4.10]{BennettSharpley88} and \cite[Chapter~2, Definition~3.1]{BennettSharpley88}). It is not difficult to notice that this functional becomes for small $\alpha$ a quasinorm even if $X$ was a normed space. It is thus inevitable to work with quasinorms and the information about the basic properties of corresponding spaces comes very handy. 

This paper thus aims to fill in some of the gaps in the basic theory of quasi-Banach function spaces. We first provide a new version of the classical Riesz--Fischer condition and prove the corresponding version of Riesz--Fischer theorem. We then show that quasi-Banach function spaces satisfy the modified criterion. We thus obtain an alternative proof of the completeness of quasi-Banach function spaces, a result that was first obtained in \cite{CaetanoGogatishvili16}, but this result has a  much wider field of application, including a general version of Landau's resonance theorem and, most importantly, the result that an embedding between quasi-Banach function spaces is always continuous, i.e.~that $X \subseteq Y$ implies $X \hookrightarrow Y$. The latter property has proved to be extraordinarily useful in the study of a wide variety of problems in the theory of function spaces. We then turn our attention to separability and obtain a characterisation for the case when the underlying measure space is the $n$-dimensional Euclidean space. Finally, we show that the dilation operator is bounded on rearrangement invariant quasi-Banach function spaces.

The paper is structured as follows. We first provide the necessary theoretical background in Section~\ref{SP} and then state and prove our results in Section~\ref{CHQBFS}. More specifically, the extension of the Riesz--Fischer theorem is proved in  Section~\ref{SGRF}, the result that quasi-Banach function spaces have the generalised Riesz--Fischer property and its applications are contained in Section~\ref{SBPQBFS}, the characterisation of separability is obtained in Section~\ref{SS}, and the proof that the dilation operator is bounded on quasi-Banach function spaces is the content of Section~\ref{SBDO}.

\section{Preliminaries} \label{SP}
This section serves to establish the basic theoretical background necessary for stating and proving our results. The notation and definitions are intended to be as standard as possible. The usual reference for most of this theory are the books \cite{BennettSharpley88}, \cite{BenyaminiLindenstrauss00} and \cite{JohnsonLindenstraus03-25}.

Throughout this paper we will denote by $(R, \mu)$, and occasionally by $(S, \nu)$, some arbitrary (totally) sigma-finite measure space. Provided a $\mu$-measurable set $E \subseteq R$ we will denote its characteristic function by $\chi_E$. By $M(R, \mu)$ we will denote the set of all $\mu$-measurable functions defined on $R$ with values in $\mathbb{C} \cup \{ \infty\}$. As it is customary, we will identify functions that coincide $\mu$-almost everywhere. We will further denote by $M_0(R, \mu)$ and $M_+(R, \mu)$ the subsets of $M(R, \mu)$ containing, respectively, the functions finite $\mu$-almost everywhere and the functions with values in $[0, \infty]$. 

For brevity, we will abbreviate $\mu$-almost everywhere, $M(R, \mu)$, $M_0(R, \mu)$ and $M_+(R, \mu)$ to $\mu$-a.e., $M$, $M_0$ and $M_+$, respectively, when there is no risk of confusion.

In the special case when $\mu$ is the classical $n$-dimensional Lebesgue measure on $\mathbb{R}^n$ we will denote it by $\lambda^n$ and in the case when it is the $k$-dimensional Hausdorff measure on $\mathbb{R}^n$, where $k < n$, we will denote it by $\mathcal{H}^{k}$. We will also denote by $\mathbb{B}^n$ the $n$-dimensional unit ball and by $\mathbb{S}^{n-1}$ the $(n-1)$-dimensional unit sphere.

When $X, Y$ are two topological linear spaces, we will denote by $Y \hookrightarrow X$ that $Y \subseteq X$ and that the identity mapping $I : Y \rightarrow X$ is continuous.

Given any complex number $z \in \mathbb{C}$ we will denote its real part by $\textup{Re}(z)$ and its imaginary part by $\textup{Im}(z)$.

Note that in this paper we consider $0$ to be an element of $\mathbb{N}$.

\subsection{Norms and quasinorms} \label{SNQ}
Let us first present the standard definition of a norm.
\begin{definition} \label{DN}
	Let $X$ be a complex linear space. A functional $\lVert \cdot \rVert : X \rightarrow [0, \infty)$ will be called a norm if it satisfies the following conditions:
	\begin{enumerate}
		\item it is positively homogeneous, i.e.~$\forall a \in \mathbb{C} \; \forall x \in X : \lVert ax  \rVert = \lvert a \rvert \lVert x \rVert$,
		\item it satisfies $\lVert x \rVert = 0 \Leftrightarrow x = 0$  in $X$,
		\item it is subadditive, i.e.~$\forall x,y \in X : \lVert x+y \rVert \leq \lVert x \rVert + \lVert y \rVert$.
	\end{enumerate}
\end{definition}

In praxis this definition is sometimes too restrictive, in the sense that in some important cases the triangle inequality does not hold. This motivates the following natural generalisation.

\begin{definition} \label{DQ}
	Let $X$ be a complex linear space. A functional $\lVert \cdot \rVert : X \rightarrow [0, \infty)$ will be called a quasinorm if it satisfies the following conditions:
	\begin{enumerate}
		\item it is positively homogeneous, i.e.~$\forall a \in \mathbb{C} \; \forall x \in X : \lVert ax  \rVert = \lvert a \rvert \lVert x \rVert$,
		\item it satisfies $\lVert x \rVert = 0 \Leftrightarrow x = 0$  in $X$,
		\item \label{DQ3} there is a constant $C \geq 1$, called the modulus of concavity of $\lVert \cdot \rVert$, such that it is subadditive up to this constant, i.e.~$\forall x,y \in X \: : \: \lVert x+y \rVert \leq C(\lVert x \rVert + \lVert y \rVert)$.
	\end{enumerate}
\end{definition}

It is obvious that every norm is also a quasinorm with the modulus of concavity equal to $1$ and that every quasinorm with the modulus of concavity equal to $1$ is also a norm. 

It is a well-known fact that every norm defines a metrisable topology on $X$ and that it is continuous with respect to that topology. This is not true for quasinorms, but it can be remedied thanks to the Aoki--Rolewicz theorem which we list below. Further details can be found for example in \cite{JohnsonLindenstraus03-25} or in \cite[Appendix~H]{BenyaminiLindenstrauss00}.

\begin{theorem}
	Let $\lVert \cdot \rVert_1$ be a quasinorm over the linear space $X$. Then there is a  quasinorm $\lVert \cdot \rVert_2$ such that
	\begin{enumerate}
		\item there is a finite constant $C_0 > 0$ such that it holds for all $x \in X$ that
		\begin{equation*}
			C_0^{-1} \lVert x \rVert_1 \leq \lVert x \rVert_2 \leq C_0 \lVert x \rVert_1,
		\end{equation*}
		\item there is an $r \in (0, 1]$ such that it holds for all $x,y \in X$ that
		\begin{equation*}
			\lVert x+y \rVert_2^r \leq \lVert x \rVert_2^r + \lVert y \rVert_2^r.
		\end{equation*}
	\end{enumerate}
\end{theorem}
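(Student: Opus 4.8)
The plan is to follow the classical route to the Aoki--Rolewicz theorem: regularise the given quasinorm by an infimum over finite decompositions, tuned to an exponent dictated by the modulus of concavity. Writing $C \ge 1$ for the modulus of concavity of $\lVert \cdot \rVert_1$, I first choose $r \in (0,1]$ to satisfy $(2C)^r = 2$, that is $r = \log 2 / \log(2C)$; since $C \ge 1$ forces $2C \ge 2$, this $r$ indeed lies in $(0,1]$, and it equals $1$ exactly when $\lVert \cdot \rVert_1$ is already a norm. I then define, for each $x \in X$,
\begin{equation*}
	\lVert x \rVert_2 = \inf \left\{ \Big( \sum_{i=1}^{m} \lVert x_i \rVert_1^{\,r} \Big)^{1/r} : m \in \mathbb{N},\ x_1, \dots, x_m \in X,\ x = \sum_{i=1}^{m} x_i \right\},
\end{equation*}
the infimum ranging over all finite decompositions of $x$.

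Three of the required properties are then essentially formal. Positive homogeneity of $\lVert \cdot \rVert_2$ is inherited from that of $\lVert \cdot \rVert_1$. The $r$-subadditivity demanded in item~(2) is built into the construction: concatenating a decomposition of $x$ with one of $y$ produces a decomposition of $x+y$, so $\lVert x+y \rVert_2^{\,r} \le \lVert x \rVert_2^{\,r} + \lVert y \rVert_2^{\,r}$ follows upon passing to the infimum. Finally, testing the definition against the trivial one-term decomposition $x = x$ gives one half of the equivalence in item~(1), namely $\lVert x \rVert_2 \le \lVert x \rVert_1$.

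Everything thus reduces to the reverse estimate $\lVert x \rVert_1 \le K \lVert x \rVert_2$ for some finite $K = K(C)$, which is in turn equivalent to the aggregated quasi-triangle inequality
\begin{equation*}
	\Big\lVert \sum_{i=1}^{m} x_i \Big\rVert_1 \le K \Big( \sum_{i=1}^{m} \lVert x_i \rVert_1^{\,r} \Big)^{1/r},
\end{equation*}
valid for every finite family in $X$: taking the infimum over all decompositions of a fixed $x$ then yields $\lVert x \rVert_1 \le K \lVert x \rVert_2$. This single inequality simultaneously supplies the missing definiteness of $\lVert \cdot \rVert_2$ (if $\lVert x \rVert_2 = 0$ then $\lVert x \rVert_1 = 0$, hence $x = 0$) and, together with the trivial bound above, both inequalities in item~(1) with $C_0 = K$ (note $K \ge 1$).

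The aggregated inequality is where the real work lies, and I expect it to be the main obstacle. I would prove it by a dyadic grouping argument: after normalising so that $\sum_i \lVert x_i \rVert_1^{\,r} = 1$ and ordering the summands by decreasing quasinorm, I would partition them into blocks according to the dyadic scale of $\lVert x_i \rVert_1$, control the cardinality of each block through the normalisation, and estimate the quasinorm of a block by iterating the defining inequality $\lVert a+b \rVert_1 \le C(\lVert a \rVert_1 + \lVert b \rVert_1)$ over $\lceil \log_2 (\#\mathrm{block}) \rceil$ successive pairings. The calibration $(2C)^r = 2$ is precisely what converts these per-block estimates into contributions that can be summed, and the strict inequality $r < 1$ is what forces the resulting series to converge to a bound independent of $m$. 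The genuinely delicate point is assembling the individual block estimates into a single constant $K$ that does not deteriorate as the number of summands grows; this is the technical heart of the theorem and the step that requires the careful bookkeeping of cardinalities and scales.
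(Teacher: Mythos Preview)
The paper does not supply a proof of this statement at all: it is the classical Aoki--Rolewicz theorem, listed among the preliminaries with only a pointer to \cite{JohnsonLindenstraus03-25} and \cite[Appendix~H]{BenyaminiLindenstrauss00} for details. There is therefore nothing in the paper to compare your argument against.

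That said, your outline is precisely the standard construction carried out in those references: choose $r$ by $(2C)^r=2$, define $\lVert x\rVert_2$ as the infimum of $\bigl(\sum_i\lVert x_i\rVert_1^{\,r}\bigr)^{1/r}$ over finite decompositions, and reduce everything to the aggregated inequality $\bigl\lVert\sum_{i=1}^m x_i\bigr\rVert_1^{\,r}\le K^r\sum_{i=1}^m\lVert x_i\rVert_1^{\,r}$. Your identification of this last step as the only substantive one, and your plan to attack it by ordering the summands and grouping them dyadically, are both on target. One remark: the version of the grouping you describe (blocks determined by the dyadic scale of $\lVert x_i\rVert_1$) does work, but the bookkeeping is slightly cleaner if you instead run strong induction on $m$, splitting the ordered family at the first index where the partial sum of $r$-th powers exceeds half the total; this avoids having to re-sum over the blocks and yields the constant directly. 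Either route is standard and either would complete the proof.
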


The direct consequence of this result is that every quasinorm defines a metrisable topology on $X$ and that the convergence in said topology is equivalent to the convergence with respect to the original quasinorm, in the sense that $x_n \rightarrow x$ in the induced topology if and only if $\lim_{n \rightarrow \infty} \lVert x_n - x \rVert = 0$. Furthermore, we may naturally extend the concept of completeness to quasinormed spaces.

\begin{definition} \label{DC}
	Quasinormed space $X$ equipped with a quasinorm $\lVert \cdot \rVert$ is said to be complete if and only if it holds that for any sequence $x_n$ of elements of $X$ that is Cauchy with respect to the quasinorm $\lVert \cdot \rVert$ there is some $x \in X$ such that $x_n \rightarrow x$.
\end{definition}

Natural question to ask is when do different quasinorms define equivalent topologies. It is an easy exercise to show that the answer is the same as in the case of norms, that is that two quasinorms are topologically equivalent if and only if they are equivalent in the following sense.

\begin{definition}
	Let $\lVert \cdot \rVert_1$ and  $\lVert \cdot \rVert_2$ be quasinorms over the linear space $X$. We say that $\lVert \cdot \rVert_1$ and  $\lVert \cdot \rVert_2$ are equivalent if there is some $C_0 > 0$ such that it holds for all $x \in X$ that
	\begin{equation*}
		C_0^{-1} \lVert x \rVert_1 \leq \lVert x \rVert_2 \leq C_0 \lVert x \rVert_1.
	\end{equation*}
\end{definition}

\subsection{Banach function norms and quasi-Banach function norms} \label{SBFNQN}

We now turn our attention to norms acting on spaces of functions. The approach taken here is the same as in \cite[Chapter 1, Section 1]{BennettSharpley88}, which means that it differs, at least formally, from that in Section~\ref{SNQ}.

The major definitions are of course those of Banach function norm and the corresponding Banach function space.

\begin{definition}
	Let $\lVert \cdot \rVert : M(R, \mu) \rightarrow [0, \infty]$ be a mapping satisfying $\lVert \, \lvert f \rvert \, \rVert = \lVert f \rVert$ for all $f \in M$. We say that $\lVert \cdot \rVert$ is a Banach function norm if its restriction to $M_+$ satisfies the following axioms:
	\begin{enumerate}[label=\textup{(P\arabic*)}, series=P]
		\item \label{P1} it is a norm, in the sense that it satisfies the following three conditions:
		\begin{enumerate}[ref=(\theenumii)]
			\item \label{P1a} it is positively homogeneous, i.e.\ $\forall a \in \mathbb{C} \; \forall f \in M_+ : \lVert a f \rVert = \lvert a \rvert \lVert f \rVert$,
			\item \label{P1b} it satisfies $\lVert f \rVert = 0 \Leftrightarrow f = 0$  $\mu$-a.e.,
			\item \label{P1c} it is subadditive, i.e.\ $\forall f,g \in M_+ \: : \: \lVert f+g \rVert \leq \lVert f \rVert + \lVert g \rVert$,
		\end{enumerate}
		\item \label{P2} it has the lattice property, i.e.\ if some $f, g \in M_+$ satisfy $f \leq g$ $\mu$-a.e., then also $\lVert f \rVert \leq \lVert g \rVert$,
		\item \label{P3} it has the Fatou property, i.e.\ if  some $f_n, f \in M_+$ satisfy $f_n \uparrow f$ $\mu$-a.e., then also $\lVert f_n \rVert \uparrow \lVert f \rVert $,
		\item \label{P4} $\lVert \chi_E \rVert < \infty$ for all $E \subseteq R$ satisfying $\mu(E) < \infty$,
		\item \label{P5} for every $E \subseteq R$ satisfying $\mu(E) < \infty$ there exists some finite constant $C_E$, dependent only on $E$, such that the inequality $ \int_E f \: d\mu \leq C_E \lVert f \rVert $ is true for all $f \in M_+$.
	\end{enumerate} 
\end{definition}

\begin{definition}
	Let $\lVert \cdot \rVert_X$ be a Banach function norm. We then define the corresponding Banach function space $X$ as the set
	\begin{equation*}
		X = \left \{ f \in M; \; \lVert f \rVert_X < \infty \right \}.
	\end{equation*}
	Furthermore, the normed linear space $(X, \lVert \cdot \rVert_X)$ will also be called a Banach function space.
\end{definition}

Just as with general norms, the triangle inequality is sometimes too strong a condition to require. This leads to the definition of quasi-Banach function norms and of the corresponding quasi-Banach function spaces which will be our main topic in this paper.

\begin{definition}
	Let $\lVert \cdot \rVert : M(R, \mu) \rightarrow [0, \infty]$ be a mapping satisfying $\lVert \, \lvert f \rvert \, \rVert = \lVert f \rVert$ for all $f \in M$. We say that $\lVert \cdot \rVert$ is a quasi-Banach function norm if its restriction to $M_+$ satisfies the axioms \ref{P2}, \ref{P3}, \ref{P4} of Banach function norms together with a weaker version of axiom \ref{P1}, namely
	\begin{enumerate}[label=\textup{(Q\arabic*)}]
		\item \label{Q1} it is a quasinorm, in the sense that it satisfies the following three conditions:
		\begin{enumerate}[ref=(\theenumii)]
			\item \label{Q1a} it is positively homogeneous, i.e.\ $\forall a \in \mathbb{C} \; \forall f \in M_+ : \lVert af \rVert = \lvert a \rvert \lVert f \rVert$,
			\item \label{Q1b} it satisfies  $\lVert f \rVert = 0 \Leftrightarrow f = 0$ $\mu$-a.e.,
			\item \label{Q1c} there is a constant $C \geq 1$, called the modulus of concavity of $\lVert \cdot \rVert$, such that it is subadditive up to this constant, i.e.
			\begin{equation*}
				\forall f,g \in M_+ : \lVert f+g \rVert \leq C(\lVert f \rVert + \lVert g \rVert).
			\end{equation*}
		\end{enumerate}
	\end{enumerate}
\end{definition}

\begin{definition}
	Let $\lVert \cdot \rVert_X$ be a quasi-Banach function norm. We then define the corresponding quasi-Banach function space $X$ as the set
	\begin{equation*}
		X = \left \{ f \in M; \; \lVert f \rVert_X < \infty \right \}.
	\end{equation*}
	Furthermore, the quasinormed linear space $(X, \lVert \cdot \rVert_X)$ will also be called a quasi-Banach function space.
\end{definition}

The most natural examples of quasi-Banach function spaces are the classical Lebesgue spaces $L^p$ with $p \in (0, 1)$. To provide a natural example of quasi-Banach function space that also satisfies the axiom \ref{P5} we turn to the Lorentz spaces $L^{p,q}$ with $p>1$ and $q<1$. The quasi-Banach function norm they are induced from is, in our case $q<1$, of the form
\begin{equation*}
	\lVert f \rVert_{p,q} = \left ( \int_{0}^{\infty} t^{\frac{q}{p}-1} \left ( f^*(t) \right )^q \: dt \right )^{\frac{1}{q}},
\end{equation*}
where $f^*$ denotes the non-increasing rearrangement of $f$ as defined in Definition~\ref{DNIR}.

Let us now list here two of the most important properties of Banach function spaces which we will generalise in Section~\ref{CHQBFS}. Proofs of the standard versions presented below can be found in \cite[Chapter 1, Section 1]{BennettSharpley88}. 

\begin{theorem} \label{TBFSC}
	Let $(X, \lVert \cdot \rVert_X)$ be a Banach function space. Then it is a Banach space.
\end{theorem}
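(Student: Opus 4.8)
The plan is to deduce completeness from the classical Riesz--Fischer characterisation: a normed linear space is complete precisely when every series $\sum_n f_n$ whose terms satisfy $\sum_n \lVert f_n \rVert_X < \infty$ converges in the space. The forward implication of this characterisation is immediate, since the partial sums of such a series are Cauchy by the triangle inequality. For the converse I would pass, from a given Cauchy sequence, to a rapidly Cauchy subsequence $(f_{n_k})$ with $\lVert f_{n_{k+1}} - f_{n_k} \rVert_X \le 2^{-k}$, write its limit as a telescoping absolutely summable series, and use that a Cauchy sequence with a convergent subsequence converges. Thus it suffices to verify that $X$ enjoys this series property.

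So I would fix $(f_n) \subseteq X$ with $s := \sum_n \lVert f_n \rVert_X < \infty$ and set $g_N = \sum_{n=1}^N \lvert f_n \rvert$ and $g = \sum_{n=1}^\infty \lvert f_n \rvert \in M_+$, so that $g_N \uparrow g$ pointwise $\mu$-a.e. Iterating subadditivity gives $\lVert g_N \rVert_X \le \sum_{n=1}^N \lVert f_n \rVert_X \le s$, and the Fatou property then upgrades this to $\lVert g \rVert_X = \lim_N \lVert g_N \rVert_X \le s < \infty$, so $g \in X$. I would next record the auxiliary fact that a function of finite norm is finite $\mu$-a.e.: were $g = \infty$ on a set of positive measure, $\sigma$-finiteness would produce a set $E$ with $0 < \mu(E) < \infty$ on which $g = \infty$, whence $m\chi_E \le g$ for every $m \in \mathbb{N}$ would force, by the lattice property, $m \lVert \chi_E \rVert_X \le \lVert g \rVert_X$, contradicting $0 < \lVert \chi_E \rVert_X < \infty$ (positivity because $\mu(E) > 0$, finiteness because $\mu(E) < \infty$). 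Consequently $\sum_n f_n(x)$ converges absolutely for $\mu$-a.e.\ $x$, and defining $f$ to be its sum (and $0$ elsewhere) yields $\lvert f \rvert \le g$, so $f \in X$ again by the lattice property.

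Finally, to obtain norm convergence of the partial sums $s_N = \sum_{n=1}^N f_n$ to $f$, I would estimate $\lvert f - s_N \rvert \le \sum_{n > N} \lvert f_n \rvert =: r_N$ pointwise a.e. Since $\sum_{n=N+1}^M \lvert f_n \rvert \uparrow r_N$ as $M \to \infty$, the same combination of iterated subadditivity and the Fatou property gives $\lVert r_N \rVert_X \le \sum_{n>N} \lVert f_n \rVert_X$, and the lattice property then yields $\lVert f - s_N \rVert_X \le \lVert r_N \rVert_X \le \sum_{n>N}\lVert f_n\rVert_X \to 0$ as the tail of a convergent series. Hence $\sum_n f_n$ converges to $f$ in $X$, which establishes the series property and therefore completeness.

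The single recurring pressure point — and the only place where the function-space axioms do genuine work beyond the abstract Riesz--Fischer scheme — is the passage from the finite triangle inequality to countable subadditivity of $\lVert \cdot \rVert_X$, which is exactly what the Fatou property supplies. Everything else is bookkeeping around the lattice property and the elementary observation that finiteness of the norm forces finiteness $\mu$-a.e.
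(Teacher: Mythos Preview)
Your proof is correct and follows precisely the classical Riesz--Fischer route that the paper itself adopts: the paper does not give a separate proof of Theorem~\ref{TBFSC} (it defers to \cite[Chapter~1, Section~1]{BennettSharpley88}), but its proof of the quasi-Banach generalisation (Theorem~\ref{TC} together with Theorem~\ref{QRF}) is exactly your argument with the triangle inequality replaced by Lemma~\ref{NT} and with part~\ref{LFp2} of Lemma~\ref{LF} used in place of your direct Fatou estimate on the tail $r_N$. Both variants hinge on the same idea---Fatou upgrades finite subadditivity to countable subadditivity---so there is no substantive difference.
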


\begin{theorem} \label{TBFSE}
	Let $(X, \lVert \cdot \rVert_X)$ and $(Y, \lVert \cdot \rVert_Y)$ be Banach function spaces. If $X \subseteq Y$ then also $X \hookrightarrow Y$.
\end{theorem}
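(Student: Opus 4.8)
The plan is to deduce continuity of the inclusion from the Closed Graph Theorem, whose applicability hinges on a single measure-theoretic fact: norm convergence in a Banach function space forces $\mu$-a.e.\ convergence along a subsequence.

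First I would record that, by Theorem~\ref{TBFSC}, both $(X, \lVert \cdot \rVert_X)$ and $(Y, \lVert \cdot \rVert_Y)$ are genuine Banach spaces, so that the Closed Graph Theorem is available. The set-theoretic hypothesis $X \subseteq Y$ guarantees that the identity map $I : X \to Y$ is a well-defined linear operator, and the whole problem reduces to verifying that its graph is closed --- that is, that $f_n \to f$ in $X$ together with $f_n \to g$ in $Y$ forces $f = g$ $\mu$-a.e.

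The heart of the argument, and the step I expect to be the main obstacle, is extracting pointwise information from the abstract norm convergence; this is exactly where axiom~\ref{P5} does its work. Fixing any $E \subseteq R$ with $\mu(E) < \infty$ and applying \ref{P5} to $\lvert f_n - f \rvert \in M_+$ (using $\lVert \, \lvert \cdot \rvert \, \rVert = \lVert \cdot \rVert$), I would obtain
\begin{equation*}
	\int_E \lvert f_n - f \rvert \, d\mu \leq C_E \lVert f_n - f \rVert_X \longrightarrow 0,
\end{equation*}
so that $f_n \to f$ in $L^1(E)$ and hence some subsequence converges to $f$ $\mu$-a.e.\ on $E$. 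Exploiting sigma-finiteness, I would write $R = \bigcup_j E_j$ with $\mu(E_j) < \infty$ and run a diagonal argument to produce a single subsequence $(f_{n_k})$ with $f_{n_k} \to f$ $\mu$-a.e.\ on $R$. Feeding this subsequence through the analogous estimate furnished by axiom~\ref{P5} for $Y$ and extracting once more, I would get a further subsequence converging to $g$ $\mu$-a.e.; being also a subsequence of $(f_{n_k})$ it still converges to $f$ $\mu$-a.e., and uniqueness of almost everywhere limits then gives $f = g$ $\mu$-a.e.

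Once the graph is known to be closed, the Closed Graph Theorem immediately yields boundedness of $I$, which is precisely the assertion $X \hookrightarrow Y$. I would emphasise that axiom~\ref{P5} is the crucial link between the norm topology and the underlying measure structure; all the remaining steps are standard, and it is the behaviour of this axiom in the quasinormed setting that one must watch carefully when generalising the statement later in the paper.
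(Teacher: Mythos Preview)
Your argument is correct and is essentially the classical Closed Graph proof from \cite[Chapter~1, Theorem~1.8]{BennettSharpley88}, which is exactly what the paper cites for Theorem~\ref{TBFSE}; the paper does not supply an independent proof of this preliminary statement.

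Where your proposal diverges from the paper is in the outlook on generalisation. You single out axiom~\ref{P5} as the crucial bridge and anticipate that ``the behaviour of this axiom in the quasinormed setting'' will be the delicate point. The paper's own route to the quasi-Banach version (Corollary~\ref{CEQBFS}) does the opposite: it discards~\ref{P5} and the Closed Graph Theorem entirely, deducing continuity of the embedding as the special case $T=\mathrm{Id}$ of Theorem~\ref{TOQBFS}, whose proof rests on the generalised Riesz--Fischer property (Theorem~\ref{TC}) and a direct construction of a function with exploding $Y$-norm. The advantage of your approach is economy in the Banach case---it needs only completeness and~\ref{P5}---but it does not transfer to quasi-Banach function spaces as defined here, since those are not required to satisfy~\ref{P5} and hence norm convergence need not imply local convergence in measure. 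The paper's method buys exactly this: it works under~\ref{Q1}--\ref{P4} alone, at the price of the preparatory Riesz--Fischer machinery of Sections~\ref{SGRF}--\ref{SBPQBFS}.
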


An important concept in the theory of quasi-Banach function spaces is the absolute continuity of the quasinorm.

\begin{definition}
	Let $(X, \lVert \cdot \rVert_X)$ be a quasi-Banach function space. We say that a function $f \in X$ has absolutely continuous quasinorm if it holds that $\lVert f \chi_{E_k} \rVert_X \rightarrow 0$ whenever $E_k$ is a sequence of $\mu$-measurable subsets of $R$ such that $\chi_{E_k} \rightarrow 0$ $\mu$-a.e.
	
	If every $f \in X$ has absolutely continuous quasinorm we further say that the space $X$ itself has absolutely continuous quasinorm.
\end{definition}

This concept is important, because it is deeply connected to separability and also reflexivity of Banach function spaces, for details see \cite[Chapter~1, Sections~3, 4 and 5]{BennettSharpley88}. As we will show in Section~\ref{SS}, at least in the case of separability this connection extends to the context of quasi-Banach function spaces. For now, we present here the following two elementary observations that we will use later on. Their proofs are exactly the same as the proofs of analogous statements in \cite[Chapter~1, Section~3]{BennettSharpley88}.

\begin{proposition}
	Let $(X, \lVert \cdot \rVert_X)$ be a quasi-Banach function space and let $f \in X$. Then $f$ has absolutely continuous quasinorm if and only if $\lVert f \chi_{E_k} \rVert \downarrow 0$ for every sequence $E_k$ of subsets of $R$ such that $\chi_{E_k} \downarrow 0$ $\mu$-a.e.
\end{proposition}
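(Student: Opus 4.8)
The plan is to prove the two implications separately, with the reverse implication being the substantive one. Throughout, I would rely on the lattice property \ref{P2}, which quasi-Banach function norms inherit from Banach function norms, together with the elementary fact that characteristic functions take values only in $\{0,1\}$.

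For the direct implication, I would observe that if $\chi_{E_k} \downarrow 0$ $\mu$-a.e.\ then in particular $\chi_{E_k} \rightarrow 0$ $\mu$-a.e., so the defining property of absolute continuity immediately yields $\lVert f \chi_{E_k} \rVert_X \rightarrow 0$. It then remains only to verify monotonicity of the norms. Since $\chi_{E_k} \downarrow 0$ forces $E_{k+1} \subseteq E_k$ up to a null set, we have $\lvert f \rvert \chi_{E_{k+1}} \leq \lvert f \rvert \chi_{E_k}$ $\mu$-a.e., and the lattice property gives $\lVert f \chi_{E_{k+1}} \rVert_X \leq \lVert f \chi_{E_k} \rVert_X$. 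Hence the convergence is in fact monotone, i.e.\ $\lVert f \chi_{E_k} \rVert_X \downarrow 0$.

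The reverse implication is where the real work lies. Given an arbitrary sequence $E_k$ with $\chi_{E_k} \rightarrow 0$ $\mu$-a.e., the idea is to dominate it by a decreasing sequence to which the hypothesis applies. I would set $G_k = \bigcup_{j \geq k} E_j$, so that the sets $G_k$ form a decreasing sequence with $E_k \subseteq G_k$. The key step is to check that $\chi_{G_k} \downarrow 0$ $\mu$-a.e.: because each $\chi_{E_j}$ is $\{0,1\}$-valued, the convergence $\chi_{E_j}(x) \rightarrow 0$ at a point $x$ means precisely that $x \notin E_j$ for all sufficiently large $j$, hence $x \notin G_k$ for all sufficiently large $k$, giving $\chi_{G_k}(x) \rightarrow 0$; monotonicity is immediate from the nesting. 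The hypothesis then yields $\lVert f \chi_{G_k} \rVert_X \downarrow 0$, and since $\lvert f \rvert \chi_{E_k} \leq \lvert f \rvert \chi_{G_k}$ $\mu$-a.e., the lattice property gives $\lVert f \chi_{E_k} \rVert_X \leq \lVert f \chi_{G_k} \rVert_X \rightarrow 0$. This establishes that $f$ has absolutely continuous quasinorm.

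I expect the only genuine subtlety to be the passage from the arbitrary null sequence $E_k$ to the decreasing envelope $G_k$, and in particular the verification that $\chi_{G_k} \downarrow 0$ $\mu$-a.e.; this is exactly where one uses that characteristic functions are two-valued, so that pointwise convergence to zero is equivalent to eventual failure of membership. Everything else reduces to the definitions and the lattice property, so in particular no estimate involving the modulus of concavity is needed.
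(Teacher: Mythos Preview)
Your proof is correct and follows exactly the standard argument: the forward implication is trivial, and for the reverse you pass from an arbitrary sequence $E_k$ with $\chi_{E_k}\to 0$ to the decreasing envelope $G_k=\bigcup_{j\ge k}E_j$, verify $\chi_{G_k}\downarrow 0$ $\mu$-a.e.\ using that characteristic functions are two-valued, and then invoke the hypothesis together with the lattice property. The paper does not spell out its own proof but refers to \cite[Chapter~1, Section~3]{BennettSharpley88}, where precisely this argument is given, so your approach coincides with the one the paper intends.
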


\begin{proposition}
	Let $(X, \lVert \cdot \rVert_X)$ be a quasi-Banach function space and let $f \in X$. Suppose that $f$ has absolutely continuous quasinorm. Then for every $\varepsilon > 0$ there is a $\delta > 0$ such that given any $\mu$-measurable set $E \subseteq R$ satisfying $\mu(E) < \delta$ it holds that $\lVert f \chi_E \rVert_X < \varepsilon$.
\end{proposition}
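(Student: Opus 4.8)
The plan is to argue by contradiction, turning the failure of the $\varepsilon$--$\delta$ conclusion into a sequence of sets that witnesses a violation of the absolute continuity of the quasinorm of $f$. Suppose the statement fails for some fixed $\varepsilon > 0$. Then for every $\delta > 0$, and in particular for $\delta = 2^{-k}$ with $k \in \mathbb{N}$, there is a $\mu$-measurable set $E_k \subseteq R$ with $\mu(E_k) < 2^{-k}$ yet $\lVert f \chi_{E_k} \rVert_X \geq \varepsilon$. The guiding idea is that these sets have rapidly vanishing measure, so that $\mu$-almost every point can lie in only finitely many of them; this forces $\chi_{E_k} \to 0$ $\mu$-a.e., and applying the very definition of absolute continuity to the sequence $(E_k)$ then contradicts the uniform lower bound $\varepsilon$.

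First I would establish the pointwise convergence $\chi_{E_k} \to 0$ $\mu$-a.e. Since $\sum_{k} \mu(E_k) \leq \sum_k 2^{-k} < \infty$, the Borel--Cantelli lemma gives $\mu(\limsup_k E_k) = 0$; equivalently, the set of points belonging to infinitely many of the $E_k$ is $\mu$-null. For every point outside this null set the value $\chi_{E_k}$ is eventually $0$, so indeed $\chi_{E_k} \to 0$ $\mu$-a.e. Having secured this, the definition of absolute continuity of the quasinorm of $f$ applies directly and yields $\lVert f \chi_{E_k} \rVert_X \to 0$, which is incompatible with $\lVert f \chi_{E_k} \rVert_X \geq \varepsilon$ for all $k$. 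This contradiction completes the argument.

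I do not expect a genuine obstacle here; the only mildly delicate point is passing from summability of the measures to a.e.\ convergence of the characteristic functions, which is exactly the (measure-theoretic direction of the) Borel--Cantelli lemma and is unaffected by the quasinorm structure. If one prefers to invoke the preceding proposition instead of Borel--Cantelli, one may pass to the decreasing sets $F_k = \bigcup_{j \geq k} E_j$, which satisfy $\mu(F_k) \leq 2^{-k+1} \to 0$ and hence $\chi_{F_k} \downarrow 0$ $\mu$-a.e.; that proposition then gives $\lVert f \chi_{F_k} \rVert_X \downarrow 0$, while the lattice property \ref{P2} together with $\lvert f \rvert \chi_{E_k} \leq \lvert f \rvert \chi_{F_k}$ yields $\lVert f \chi_{E_k} \rVert_X \leq \lVert f \chi_{F_k} \rVert_X$, producing the same contradiction.
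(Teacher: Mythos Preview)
Your proof is correct. The paper omits the proof and simply refers to \cite[Chapter~1, Section~3]{BennettSharpley88}; your argument---especially the alternative with $F_k=\bigcup_{j\ge k}E_j$ and the preceding proposition---is precisely the standard Bennett--Sharpley proof carried over verbatim to the quasinormed setting, while the Borel--Cantelli variant is an equally valid shortcut.
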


Another important concept is that of an associate space. The detailed study of associate spaces of Banach function spaces can be found in \cite[Chapter 1, Sections 2, 3 and 4]{BennettSharpley88}. We will approach the issue in a slightly more general way. The definition of an associate space requires no assumptions on the functional defining the original space.

\begin{definition} \label{DAS}
	Let $\lVert \cdot \rVert_X: M \to [0, \infty]$ be some non-negative functional and put
	\begin{equation*}
		X = \{ f \in M; \; \lVert f \rVert_X < \infty \}.
	\end{equation*} 
	Then the functional $\lVert \cdot \rVert_{X'}$ defined for $f \in M$ by 
	\begin{equation*}
	\lVert f \rVert_{X'} = \sup_{g \in X} \frac{1}{\lVert g \rVert_X} \int_R \lvert f g \rvert \: d\mu, \label{DAS1}
	\end{equation*}
	where we interpret $\frac{0}{0} = 0$ and $\frac{a}{0} = \infty$ for any $a>0$, will be called the associate functional of $\lVert \cdot \rVert_X$ while the set
	\begin{equation*}
		X' = \left \{ f \in M; \; \lVert f \rVert_{X'} < \infty \right \}
	\end{equation*}
	will be called the associate space of $X$.
\end{definition}

As suggested by the notation, we will be interested mainly in the case when  $\lVert \cdot \rVert_X$ is at least a quasinorm, but we wanted to indicate that such assumption is not necessary for the definition. In fact, it is not even required for the following result, which is the Hölder inequality for associate spaces.

\begin{theorem} \label{THAS}
	Let $\lVert \cdot \rVert_X: M \to [0, \infty]$ be some non-negative functional and denote by $\lVert \cdot \rVert_{X'}$ its associate functional. Then it holds for all $f,g \in M$ that
	\begin{equation*}
		\int_R \lvert f g \rvert \: d\mu \leq \lVert g \rVert_X \lVert f \rVert_{X'}
	\end{equation*}
	provided that we interpret $0 \cdot \infty = -\infty \cdot \infty = \infty$ on the right-hand side.
\end{theorem}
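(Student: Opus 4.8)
The plan is to read the desired inequality off the definition of the associate functional as a supremum, once the degenerate cases have been absorbed by the stated arithmetic conventions. The principal case is when $g \in X$ and $0 < \lVert g \rVert_X < \infty$. Here $g$ is an admissible competitor in the supremum defining $\lVert f \rVert_{X'}$, so directly from the definition of the supremum
\begin{equation*}
	\frac{1}{\lVert g \rVert_X} \int_R \lvert fg \rvert \: d\mu \leq \sup_{h \in X} \frac{1}{\lVert h \rVert_X} \int_R \lvert fh \rvert \: d\mu = \lVert f \rVert_{X'}.
\end{equation*}
Since $\lVert g \rVert_X$ is a positive real number, multiplying through by it yields the claim, and this remains valid whether $\lVert f \rVert_{X'}$ is finite or equal to $\infty$.

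It then remains to treat the two boundary values of $\lVert g \rVert_X$. If $\lVert g \rVert_X = 0$ (so that $g$ still belongs to $X$), I would split according to the value of $\int_R \lvert fg \rvert \: d\mu$: when this integral is positive, the convention $\tfrac{a}{0} = \infty$ built into Definition~\ref{DAS} forces the term corresponding to $g$ in the defining supremum to be $\infty$, whence $\lVert f \rVert_{X'} = \infty$ and the right-hand side equals $0 \cdot \infty = \infty$; when the integral vanishes, the left-hand side is $0$ and there is nothing to prove. If instead $\lVert g \rVert_X = \infty$, i.e.\ $g \notin X$, then the right-hand side is a product one of whose factors is $\infty$, so by the conventions $0 \cdot \infty = \infty$ and $-\infty \cdot \infty = \infty$ it equals $\infty$ regardless of the value of $\lVert f \rVert_{X'}$ (the value $-\infty$ occurring precisely when no function has finite $\lVert \cdot \rVert_X$-norm and the defining supremum is taken over the empty set).

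I do not expect any serious obstacle here: the content of the statement is essentially a reformulation of the definition of the supremum. The only point requiring care is the bookkeeping of the boundary cases, where the conventions $\tfrac{a}{0} = \infty$ inside the definition of $\lVert \cdot \rVert_{X'}$ and $0 \cdot \infty = -\infty \cdot \infty = \infty$ on the right-hand side must be invoked consistently, so that the inequality holds trivially whenever either side degenerates.
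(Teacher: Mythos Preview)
Your proposal is correct. The paper does not actually supply a proof of this theorem; it states the result and then only explains, in the paragraph following the statement, why the conventions $0\cdot\infty=\infty$ and $-\infty\cdot\infty=\infty$ are needed (namely to cover the cases $\lVert g\rVert_X=0$ with $g\neq 0$ and $X=\emptyset$). Your argument is precisely the straightforward verification from the definition of the supremum, together with the case analysis on $\lVert g\rVert_X\in\{0,\infty\}$ that the paper's remark alludes to, so there is nothing to compare beyond noting that you have written out what the paper leaves implicit.
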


The convention concerning the products at the end of this theorem is necessary precisely because we put no restrictions on $\lVert \cdot \rVert_X$ and thus there occur some pathological cases which need to be taken care of. Specifically, $0 \cdot \infty = \infty$ is required because we allow $\lVert g \rVert_X = 0$ even for non-zero $g$ while $-\infty \cdot \infty = \infty$ is required because Definition~\ref{DAS} allows $X = \emptyset$ which implies $\lVert f \rVert_{X'} = \sup \emptyset = -\infty$.

In order for the associate functional to be well behaved some assumptions on $\lVert \cdot \rVert_X$ are needed. The following result, due to Gogatishvili and Soudsk{\'y} in \cite{GogatishviliSoudsky14}, provides a sufficient condition for the associate functional to be a Banach function norm.

\begin{theorem} \label{TFA}
	Let $\lVert \cdot \rVert_X : M \to [0, \infty]$ be a functional that satisfies the axioms \ref{P4} and \ref{P5} from the definition of Banach function spaces and which also satisfies for all $f \in M$ that $\lVert f \rVert_X$ = $\lVert \, \lvert f \rvert \, \rVert_X$. Then the functional $\lVert \cdot \rVert_{X'}$ is a Banach function norm. In addition, $\lVert \cdot \rVert_X$ is equivalent to a Banach function norm if and only if there is some constant $C >0$ such that it holds for all $f \in M$ that
	\begin{equation} 
	C^{-1} \lVert f \rVert_{X''} \leq \lVert f \rVert_X \leq C \lVert f \rVert_{X''}, \label{TFA1}
	\end{equation}	
	where $\lVert \cdot \rVert_{X''}$ denotes the associate functional of $\lVert \cdot \rVert_{X'}$.
\end{theorem}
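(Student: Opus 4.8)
The plan is to treat the two assertions in turn. For the first, I would simply verify that $\lVert \cdot \rVert_{X'}$ satisfies each of the axioms \ref{P1}--\ref{P5}; the compatibility $\lVert f \rVert_{X'} = \lVert \, \lvert f \rvert \, \rVert_{X'}$ is automatic, since the defining supremum only ever sees $\lvert f \rvert$. The homogeneity part of \ref{P1} and the lattice property \ref{P2} follow immediately by extracting the scalar, respectively the monotonicity, from the integral inside the supremum, while subadditivity is the elementary fact that $\sup(a+b) \le \sup a + \sup b$ applied after $\lvert (f_1+f_2)g \rvert \le \lvert f_1 g \rvert + \lvert f_2 g \rvert$. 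The Fatou property \ref{P3} is the only one of these needing a limit argument: monotonicity gives $\lVert f_n \rVert_{X'} \uparrow L \le \lVert f \rVert_{X'}$, and for the reverse inequality I would fix a test function $g$, apply the monotone convergence theorem to obtain $\int_R \lvert f_n g \rvert \, d\mu \uparrow \int_R \lvert f g \rvert \, d\mu$, and then pass to the supremum over $g$.

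The three axioms that genuinely use the hypotheses on $\lVert \cdot \rVert_X$ are the positivity condition \ref{P1b}, \ref{P4} and \ref{P5}. For \ref{P4} I would note that, when $\mu(E) < \infty$, axiom \ref{P5} for $\lVert \cdot \rVert_X$ yields $\int_E \lvert g \rvert \, d\mu \le C_E \lVert g \rVert_X$ for every $g$, so each quotient in the supremum defining $\lVert \chi_E \rVert_{X'}$ is at most $C_E$, whence $\lVert \chi_E \rVert_{X'} \le C_E < \infty$. For \ref{P5} I would apply the Hölder inequality of Theorem~\ref{THAS} with the test function $\chi_E$ to get $\int_E \lvert f \rvert \, d\mu \le \lVert \chi_E \rVert_X \lVert f \rVert_{X'}$, the constant being finite by axiom \ref{P4} for $\lVert \cdot \rVert_X$. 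For the nontrivial half of \ref{P1b} I would argue by contraposition: if $f$ is nonzero on a set of positive measure, then $\sigma$-finiteness furnishes a set $F$ with $0 < \mu(F) < \infty$ on which $\lvert f \rvert \ge \varepsilon > 0$; since $\chi_F \in X$ by \ref{P4} and $\int_F \lvert f \rvert \, d\mu \ge \varepsilon \mu(F) > 0$, the test function $\chi_F$ forces $\lVert f \rVert_{X'} > 0$. The main nuisance of this part is the bookkeeping forced by the weak hypotheses, which permit the degenerate situations $\lVert g \rVert_X = 0$ with $g \ne 0$, or $\lVert \chi_E \rVert_X = 0$, or even $X = \emptyset$; each such case must be matched against the conventions $\tfrac{0}{0} = 0$ and $\tfrac{a}{0} = \infty$, but all of them reduce to the estimates just described.

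For the second assertion, the ``if'' direction is immediate: applying the first part with $\lVert \cdot \rVert_X$ replaced by the Banach function norm $\lVert \cdot \rVert_{X'}$ shows that $\lVert \cdot \rVert_{X''}$ is a Banach function norm, so condition \eqref{TFA1} displays $\lVert \cdot \rVert_X$ as equivalent to one. For the converse, I would suppose $C^{-1}\lVert f \rVert_Z \le \lVert f \rVert_X \le C\lVert f \rVert_Z$ for some Banach function norm $\lVert \cdot \rVert_Z$. Then $X = Z$ as sets, and since the associate functional is a supremum over this common set, the two-sided estimate transfers first to the associates and then to the biassociates, giving $C^{-1}\lVert f \rVert_{Z''} \le \lVert f \rVert_{X''} \le C\lVert f \rVert_{Z''}$ for all $f$. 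The decisive ingredient is then the classical Lorentz--Luxemburg identity $\lVert \cdot \rVert_{Z''} = \lVert \cdot \rVert_Z$, which holds because $\lVert \cdot \rVert_Z$ is a genuine Banach function norm (see \cite[Chapter~1, Theorem~2.7]{BennettSharpley88}); inserting it into the last chain yields $C^{-2}\lVert f \rVert_{X''} \le \lVert f \rVert_X \le C^{2}\lVert f \rVert_{X''}$, which is exactly \eqref{TFA1}.

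I expect the real obstacle to lie in this converse direction rather than in the axiom-checking. The characterisation \eqref{TFA1} is in essence the second-associate identity transplanted from the normed setting to the merely quasinormed, possibly degenerate, setting in which $\lVert \cdot \rVert_X$ lives; the crux is to recognise that the equivalence hypothesis lets one reduce the problem to the classical Banach-function-norm case, where the identity $\lVert \cdot \rVert_{Z''} = \lVert \cdot \rVert_Z$ is available, and then to propagate the equivalence constants correctly through the two associate operations.
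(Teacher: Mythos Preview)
Your argument is correct and follows the natural route: verify the axioms of a Banach function norm for $\lVert\cdot\rVert_{X'}$ directly from the definition (with \ref{P4} and \ref{P5} for $X'$ obtained by duality from \ref{P5} and \ref{P4} for $X$, respectively), and then handle the equivalence characterisation by transferring the two-sided estimate through the associate construction and invoking the Lorentz--Luxemburg identity $\lVert\cdot\rVert_{Z''}=\lVert\cdot\rVert_Z$ for the genuine Banach function norm $\lVert\cdot\rVert_Z$.

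There is, however, nothing to compare against: the paper does not supply its own proof of this theorem. It is quoted from Gogatishvili and Soudsk\'y~\cite{GogatishviliSoudsky14} and used as a background result in the preliminaries, alongside Theorem~\ref{TDAS}. Your write-up is in line with how such a result would be established in that reference; the only thing I would flag is presentational. You describe the degenerate cases ($\lVert g\rVert_X=0$ for nonzero $g$, or $X=\emptyset$) as mere bookkeeping, which is accurate, but in a formal write-up you would want to spell out at least once how the conventions $\tfrac00=0$, $\tfrac{a}{0}=\infty$ interact with the supremum so that, for instance, the H\"older-type bound $\int_E\lvert f\rvert\,d\mu\le\lVert\chi_E\rVert_X\lVert f\rVert_{X'}$ still delivers \ref{P5} for $X'$ when $\lVert\chi_E\rVert_X=0$.
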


As a special case, we get that the associate functional of any quasi-Banach function space that also satisfies the axiom \ref{P5} is a Banach function norm. This has been observed earlier in \cite[Remark~2.3.(iii)]{EdmundsKerman00}.

Additionally, if $\lVert \cdot \rVert_X$ is a Banach function norm then \eqref{TFA1} holds with constant one. This is a classical result of Lorentz and Luxemburg, proof of which can be found for example in \cite[Chapter~1, Theorem~2.7]{BennettSharpley88}.

\begin{theorem} \label{TDAS}
	Let $\lVert \cdot \rVert_X$ be a Banach function norm, then $\lVert \cdot \rVert_X = \lVert \cdot \rVert_{X''}$ where $\lVert \cdot \rVert_{X''}$ is the associate functional of $\lVert \cdot \rVert_{X'}$.
\end{theorem}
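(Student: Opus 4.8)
The plan is to prove the two inequalities $\lVert f \rVert_{X''} \le \lVert f \rVert_X$ and $\lVert f \rVert_X \le \lVert f \rVert_{X''}$ separately for every $f \in M$. The first is immediate from the Hölder inequality: by Theorem~\ref{THAS}, applied with the roles of $f$ and $g$ interchanged, we have $\int_R |fg| \, d\mu \le \lVert f \rVert_X \lVert g \rVert_{X'}$ for every $g \in M$, so dividing by $\lVert g \rVert_{X'}$ and taking the supremum over $g \in X'$ yields $\lVert f \rVert_{X''} \le \lVert f \rVert_X$ straight from Definition~\ref{DAS} (the degenerate cases, where some norm is $0$ or $\infty$, being routine). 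All the work therefore lies in the reverse inequality.

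Since every functional involved depends only on $|f|$, I may assume $f \ge 0$, and if $\lVert f \rVert_{X''} = \infty$ there is nothing to prove. The next step is to reduce to the case of a bounded $f$ supported on a set of finite measure. Using the sigma-finiteness of $(R, \mu)$, I fix sets $R_k \uparrow R$ with $\mu(R_k) < \infty$ and put $f_k = \min(f,k)\chi_{R_k}$, so that $f_k \uparrow f$. By Theorem~\ref{TFA} the functional $\lVert \cdot \rVert_{X'}$ is a Banach function norm, and applying the theorem once more shows that $\lVert \cdot \rVert_{X''}$ is a Banach function norm as well; in particular both $\lVert \cdot \rVert_X$ and $\lVert \cdot \rVert_{X''}$ enjoy the Fatou property~\ref{P3}. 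Hence $\lVert f_k \rVert_X \uparrow \lVert f \rVert_X$ and $\lVert f_k \rVert_{X''} \uparrow \lVert f \rVert_{X''}$, and it suffices to establish $\lVert f_k \rVert_X \le \lVert f_k \rVert_{X''}$ for each fixed $k$. From now on $f$ is bounded and supported on a fixed set $E$ with $\mu(E) < \infty$, and I may assume $\lVert f \rVert_X > 0$.

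For such an $f$ the idea is to separate it from the unit ball by a functional living in the clean dual of $L^1(E, \mu)$. First I would record that the Fatou property~\ref{P3}, together with the lattice property~\ref{P2}, makes $\lVert \cdot \rVert_X$ lower semicontinuous with respect to $\mu$-a.e.\ convergence: if $h_n \to h$ $\mu$-a.e., then the functions $\inf_{m \ge n} |h_m|$ increase to $|h|$, whence $\lVert h \rVert_X \le \liminf_n \lVert h_n \rVert_X$. Consequently the convex set $B = \{ h \in L^1(E, \mu) : \lVert h \rVert_X \le 1 \}$ is closed in $L^1(E, \mu)$, because $L^1$-convergence passes to a $\mu$-a.e.\ convergent subsequence. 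The normalised function $f_0 = f / \lVert f \rVert_X$ satisfies $\lVert f_0 \rVert_X = 1$, so for any $\delta > 0$ the function $(1+\delta) f_0$ lies outside $B$. The Hahn--Banach separation theorem, applied in $L^1(E, \mu)$, then provides a function $g \in L^\infty(E, \mu) = (L^1(E, \mu))^*$ and a real number $\alpha$ with $\operatorname{Re} \int_E h g \, d\mu \le \alpha < \operatorname{Re} \int_E (1+\delta) f_0 \, g \, d\mu$ for all $h \in B$.

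It remains to convert this separation into a statement about the associate norm. Extending $g$ by zero outside $E$, I would show that $\lVert g \rVert_{X'} \le \alpha$: by the lattice property~\ref{P2} and axiom~\ref{P5} any $h$ with $\lVert h \rVert_X \le 1$ may be replaced by $h\chi_E \in B$ without decreasing $\int_R |gh| \, d\mu$, and after adjusting the phase of $h$ so that $hg = |hg| \ge 0$ (which leaves $\lVert h \rVert_X$ unchanged) the left inequality gives $\int_E |hg| \, d\mu \le \alpha$; taking the supremum yields $\lVert g \rVert_{X'} = \lVert \, |g| \, \rVert_{X'} \le \alpha$. On the other hand the right inequality gives $\int_E f_0 |g| \, d\mu \ge \operatorname{Re} \int_E f_0 g \, d\mu > \alpha/(1+\delta)$, and $g \ne 0$ forces $0 < \lVert g \rVert_{X'} \le \alpha$. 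Combining the two estimates through Definition~\ref{DAS} produces $\lVert f_0 \rVert_{X''} \ge \int_E f_0 |g| \, d\mu \, / \, \lVert g \rVert_{X'} > 1/(1+\delta)$, and letting $\delta \to 0^+$ together with homogeneity gives $\lVert f \rVert_{X''} \ge \lVert f \rVert_X$. I expect the main obstacle to be precisely this last passage from the separating $L^\infty$ functional to the associate norm. The reason the argument must be run inside $L^1(E, \mu)$ rather than inside $X$ itself is exactly a representation issue: the dual of a Banach function space (for instance $L^\infty$) is in general strictly larger than its associate space, so a Hahn--Banach functional obtained directly on $X$ need not be given by integration against an element of $X'$. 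Working in $L^1(E, \mu)$, whose dual is cleanly $L^\infty(E, \mu) \subseteq X'$, circumvents this difficulty, and the Fatou property is exactly the ingredient that makes $B$ closed there.
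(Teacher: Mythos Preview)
The paper does not supply its own proof of Theorem~\ref{TDAS}; it is stated as the classical Lorentz--Luxemburg theorem, with the reader referred to \cite[Chapter~1, Theorem~2.7]{BennettSharpley88}. Your argument is correct and is one of the standard routes to this result: the inequality $\lVert f\rVert_{X''}\le\lVert f\rVert_X$ via H\"older, the reduction by the Fatou property to bounded $f$ supported on a set $E$ of finite measure, and then Hahn--Banach separation of $(1+\delta)f/\lVert f\rVert_X$ from the closed convex set $B=\{h\in L^1(E,\mu):\lVert h\rVert_X\le1\}$. Your explanation of why the separation must be carried out inside $L^1(E,\mu)$ rather than inside $X$ --- so that the separating functional is represented by a genuine function $g\in L^\infty(E,\mu)\subseteq X'$ rather than a possibly singular element of $X^*$ --- is precisely the crux of the matter, and the technical details (closedness of $B$ via lower semicontinuity from~\ref{P3}, the phase adjustment converting $\operatorname{Re}\int hg$ into $\int|hg|$, and the finiteness and positivity of $\lVert g\rVert_{X'}$) are all in order.
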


Let us point out that even in the case when $\lVert \cdot \rVert_X$, satisfying the assumptions of Theorem~\ref{TFA}, is not equivalent to any Banach function norm we still have one embedding, as formalised in the following statement. The proof is an easy exercise.

\begin{proposition} \label{PESSAS}
	Let $\lVert \cdot \rVert_X$ satisfy the assumptions of Theorem~\ref{TFA}. Then it holds for all $f \in M$ that
	\begin{equation*}
		\lVert f \rVert_{X''} \leq  \lVert f \rVert_X,
	\end{equation*}
	where $\lVert \cdot \rVert_{X''}$ denotes the associate functional of $\lVert \cdot \rVert_{X'}$.
\end{proposition}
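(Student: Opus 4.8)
The plan is to unwind the definition of the double associate functional $\lVert \cdot \rVert_{X''}$ and to control each term of the defining supremum by means of the Hölder inequality of Theorem~\ref{THAS}. Applying Definition~\ref{DAS} to the functional $\lVert \cdot \rVert_{X'}$ in place of $\lVert \cdot \rVert_X$, we have for any fixed $f \in M$ that
\begin{equation*}
	\lVert f \rVert_{X''} = \sup_{h \in X'} \frac{1}{\lVert h \rVert_{X'}} \int_R \lvert f h \rvert \, d\mu,
\end{equation*}
so it suffices to verify that every individual term in this supremum is bounded above by $\lVert f \rVert_X$.

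The key step is the Hölder inequality itself. Fix some $h \in X'$ with $\lVert h \rVert_{X'} > 0$. Theorem~\ref{THAS} (with the functions denoted $f$ and $g$ there chosen as $h$ and $f$, respectively) yields
\begin{equation*}
	\int_R \lvert f h \rvert \, d\mu \leq \lVert f \rVert_X \lVert h \rVert_{X'}.
\end{equation*}
Since $\lVert h \rVert_{X'}$ is by assumption positive and finite, dividing through gives $\frac{1}{\lVert h \rVert_{X'}} \int_R \lvert f h \rvert \, d\mu \leq \lVert f \rVert_X$. Taking the supremum over all such $h$ then produces the desired estimate $\lVert f \rVert_{X''} \leq \lVert f \rVert_X$; note that this works uniformly whether $\lVert f \rVert_X$ is finite or infinite, the latter case being trivial anyway.

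The only point that genuinely requires care — and what I expect to be the sole obstacle — is the treatment of the degenerate terms, namely those $h \in X'$ with $\lVert h \rVert_{X'} = 0$. For these the conventions $\frac{0}{0} = 0$ and $\frac{a}{0} = \infty$ of Definition~\ref{DAS} could in principle contribute an infinite term and thereby destroy the estimate. To rule this out I would invoke the first assertion of Theorem~\ref{TFA}: because $\lVert \cdot \rVert_X$ satisfies the hypotheses of that theorem (it obeys \ref{P4}, \ref{P5}, and $\lVert f \rVert_X = \lVert \, \lvert f \rvert \, \rVert_X$), its associate functional $\lVert \cdot \rVert_{X'}$ is a genuine Banach function norm and in particular satisfies axiom \ref{P1b}. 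Consequently $\lVert h \rVert_{X'} = 0$ forces $h = 0$ $\mu$-a.e., whence $\int_R \lvert f h \rvert \, d\mu = 0$ and the corresponding term equals $\frac{0}{0} = 0 \leq \lVert f \rVert_X$. With this case disposed of, the bound holds for every $h \in X'$, and the proof is complete.
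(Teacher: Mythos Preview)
Your argument is correct and is precisely the standard route: unfold the definition of $\lVert \cdot \rVert_{X''}$, bound each term by the H\"older inequality (Theorem~\ref{THAS}), and dispose of the degenerate case $\lVert h \rVert_{X'}=0$ via the first assertion of Theorem~\ref{TFA}. The paper itself gives no proof, merely declaring the result ``an easy exercise'', so there is nothing further to compare.
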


\subsection{Non-increasing rearrangement}
In this section we present the concept of the non-increasing rearrangement of a function and state some of its properties that will be important in the last part of the paper. We proceed in accordance with \cite[Chapter 2]{BennettSharpley88}.

The first step is to introduce the distribution function which is defined as follows.

\begin{definition}	
	The distribution function $\mu_f$ of a function $f \in M$ is defined for $s \in [0, \infty)$ by
	\begin{equation*}
		\mu_f(s) = \mu(\{ t \in R; \; \lvert f(t) \rvert > s \}).
	\end{equation*}	
\end{definition}

The non-increasing rearrangement is then defined as the generalised inverse of the distribution function.

\begin{definition} \label{DNIR}
	The non-increasing rearrangement $f^*$ of function $f \in M$ is defined for $t \in [0, \infty)$ by
	\begin{equation*}
		f^*(t) = \inf \{ s \in [0, \infty); \; \mu_f(s) \leq t \}.
	\end{equation*}
\end{definition}

For the basic properties of the distribution function and the non-increasing rearrangement, with proofs, see \cite[Chapter 2, Proposition 1.3]{BennettSharpley88} and \cite[Chapter 2, Propositin 1.7]{BennettSharpley88} respectively. We consider those basic properties to be classical and well known and we will be using them without further explicit reference.

An important class of Banach function spaces are rearrangement invariant Banach function spaces. Below we provide a slightly more general definition which also allows for quasi-Banach function spaces.

\begin{definition}
	Let $\lVert \cdot \rVert_X$ be a quasi-Banach function norm. We say that $\lVert \cdot \rVert_X$ is rearrangement invariant, abbreviated r.i., if $\lVert f\rVert_X = \lVert g \rVert_X$ whenever $f, g \in M$ satisfy $f^* = g^*$.
		
	Furthermore, if the above condition holds, the corresponding space $(X,\lVert \cdot \rVert_X)$ will be called rearrangement invariant too.
\end{definition}

An important property of r.i.~Banach function spaces over $(\mathbb{R}^n, \lambda^n)$ is that the dilation operator is bounded on those spaces, as stated in the following theorem. Standard proof uses interpolation, details can be found for example in \cite[Chapter 3, Proposition 5.11]{BennettSharpley88} or \cite[Section~2.b.]{LindenstraussTzafriri79}. Alternatively, a direct proof can be obtained by the means of the Lorentz--Luxemburg theorem (Theorem~\ref{TDAS}).

\begin{theorem} \label{TDRIS}
	Let $n \in \mathbb{N}$ and let $(X,\lVert \cdot \rVert_X)$ be an r.i.~Banach function space over $(\mathbb{R}^n, \lambda^n)$. Consider the dilation operator $D_a$ defined on $M(\mathbb{R}^n, \lambda^n)$ by
	\begin{equation*}
		D_af(s) = f(as).
	\end{equation*}
	Then $D_a: X \rightarrow X$ is a bounded operator.
\end{theorem}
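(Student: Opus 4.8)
The plan is to reduce the claim for a general r.i.~Banach function space to the already-available boundedness on the associate space, using the Lorentz--Luxemburg duality (Theorem~\ref{TDAS}) together with the Hardy--Littlewood inequality for rearrangements. First I would record the key scaling identity for the dilation operator: if $D_a f(s) = f(as)$ on $(\mathbb{R}^n, \lambda^n)$, then the distribution function satisfies $\lambda^n_{D_a f}(\sigma) = a^{-n} \lambda^n_f(\sigma)$, and consequently the non-increasing rearrangement scales as $(D_a f)^*(t) = f^*(a^n t)$. Thus, after passing to rearrangements, the $n$-dimensional dilation $D_a$ becomes the one-dimensional dilation by the factor $a^n$ acting on $f^*$. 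Since $\lVert \cdot \rVert_X$ is rearrangement invariant, understanding the action on $f^*$ is all that is needed.

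\medskip

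The next step is to set up the duality. Since $\lVert \cdot \rVert_X$ is a Banach function norm, its associate $\lVert \cdot \rVert_{X'}$ is again a Banach function norm, it is also rearrangement invariant, and by Theorem~\ref{TDAS} we have $\lVert f \rVert_X = \lVert f \rVert_{X''}$, that is,
\begin{equation*}
	\lVert f \rVert_X = \sup_{g} \frac{1}{\lVert g \rVert_{X'}} \int_{\mathbb{R}^n} \lvert f g \rvert \, d\lambda^n,
\end{equation*}
the supremum running over $g \in X'$ with $\lVert g \rVert_{X'} > 0$. I would then estimate $\int \lvert (D_a f) g \rvert \, d\lambda^n$ by performing the change of variables $s \mapsto a^{-1} s$ (or, more robustly, by invoking the Hardy--Littlewood inequality $\int \lvert uv \rvert \le \int_0^\infty u^* v^*$ together with the rearrangement identity above) to transfer the dilation from $f$ onto $g$. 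Concretely, the change of variables yields
\begin{equation*}
	\int_{\mathbb{R}^n} \lvert D_a f \cdot g \rvert \, d\lambda^n = a^{-n} \int_{\mathbb{R}^n} \lvert f \cdot D_{1/a} g \rvert \, d\lambda^n \leq a^{-n} \lVert f \rVert_{X''} \, \lVert D_{1/a} g \rVert_{X'},
\end{equation*}
using the Hölder inequality of Theorem~\ref{THAS} in the last step.

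\medskip

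I would then assume the boundedness of the dilation operator on the associate space $X'$ (which is a genuine Banach function space, so Theorem~\ref{TDRIS} for the normed case, taken as the base case established by the interpolation argument cited in the remarks preceding the statement, applies to it), giving a constant $h_{X'}(1/a)$ with $\lVert D_{1/a} g \rVert_{X'} \le h_{X'}(1/a) \lVert g \rVert_{X'}$. Dividing by $\lVert g \rVert_{X'}$ and taking the supremum over admissible $g$ then yields
\begin{equation*}
	\lVert D_a f \rVert_X = \lVert D_a f \rVert_{X''} \leq a^{-n} \, h_{X'}(1/a) \, \lVert f \rVert_X,
\end{equation*}
which is exactly the desired boundedness of $D_a : X \to X$. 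The cleanest self-contained route, however, avoids assuming the result on $X'$ altogether: one proves a quantitative bound $(D_a f)^*(t) = f^*(a^n t)$ directly and then uses that the one-dimensional dilation is bounded on every r.i.~space by the classical sublinearity/interpolation estimate, so I would present the duality argument as the main line and remark that it reduces everything to the one-dimensional scaling identity.

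\medskip

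The step I expect to be the main obstacle is making the reduction fully rigorous when the underlying measure on $\mathbb{R}^n$ is the Lebesgue measure but the norm is defined intrinsically on $n$-dimensional functions rather than on their one-dimensional rearrangements; one must be careful that the representation via $f^*$ is valid, i.e.~that $\lVert f \rVert_X$ genuinely depends only on $f^*$ and that the associate norm inherits rearrangement invariance. Verifying that $\lVert \cdot \rVert_{X'}$ is r.i.~(which follows from the Hardy--Littlewood inequality and the Lorentz--Luxemburg identity) and handling the degenerate normalisation cases in the supremum defining $\lVert \cdot \rVert_{X''}$ are the technical points requiring care; the change-of-variables and Hölder estimates themselves are routine.
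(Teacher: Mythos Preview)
The paper does not actually prove Theorem~\ref{TDRIS}; it is quoted as a known preliminary result, with two routes indicated in the surrounding text (interpolation, or the Lorentz--Luxemburg theorem). Your proposal follows the second of these. However, as written it is circular: you bound $\lVert D_a f\rVert_X$ in terms of $\lVert D_{1/a} g\rVert_{X'}$ and then invoke Theorem~\ref{TDRIS} itself, applied to $X'$, to control the latter. Saying that the ``base case'' is supplied by the interpolation proof is not a proof of the theorem; it merely rederives the statement for $X$ from the statement for $X'$.

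The gap is easy to close, and you have already recorded the ingredient that does it. From $(D_b g)^*(t)=g^*(b^{n}t)$ and monotonicity of $g^*$ one gets, for every $b\ge 1$ and every r.i.\ (quasi-)Banach function space $Y$, the trivial contraction estimate $\lVert D_b g\rVert_Y\le\lVert g\rVert_Y$. This already proves the theorem for $a\ge 1$ with no duality at all. For $a\in(0,1)$ your change of variables gives
\[
\int_{\mathbb{R}^n}|D_a f\cdot g|\,d\lambda^n
= a^{-n}\int_{\mathbb{R}^n}|f\cdot D_{1/a} g|\,d\lambda^n
\le a^{-n}\lVert f\rVert_X\,\lVert D_{1/a} g\rVert_{X'}
\le a^{-n}\lVert f\rVert_X\,\lVert g\rVert_{X'},
\]
the last step using the contraction estimate on $X'$ with $b=1/a\ge 1$. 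Taking the supremum and applying Theorem~\ref{TDAS} yields $\lVert D_a f\rVert_X\le a^{-n}\lVert f\rVert_X$. With this one-line observation inserted, your duality argument is complete and self-contained; no appeal to interpolation or to the theorem on $X'$ is needed.

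For comparison, the paper's own new contribution on this topic is Theorem~\ref{TBD}, which reproves and extends Theorem~\ref{TDRIS} to r.i.\ \emph{quasi}-Banach function spaces by an entirely different, elementary decomposition argument (Lemmas~\ref{LMDO}--\ref{LB}). That route avoids duality altogether, which is essential there since neither interpolation nor $X=X''$ is available in the quasi-Banach setting.
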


We extend this result to the context of r.i.~quasi-Banach spaces in Theorem~\ref{TBD}. To this end, we employ the concept of radial non-increasing rearrangement which we present below.

\begin{definition} \label{DRNIR}
	Let $n \in \mathbb{N}, n \geq 1$. The radial non-increasing rearrangement $f^{\star}$ of function $f \in M(\mathbb{R}^n, \lambda^n)$ is defined for $x \in \mathbb{R}^n$ by
	\begin{equation*}
		f^{\star}(x) = f^*(\alpha_n \lvert x \rvert^n),
	\end{equation*}
	where the constant $\alpha_n$ is defined by
	\begin{equation} \label{DRNIR1}
		\alpha_n = \frac{1}{n} \mathcal{H}^{n-1}\left( \mathbb{S}^{n-1} \right) = \lambda^n \left (\mathbb{B}^n \right).
	\end{equation}
\end{definition}

It is obvious that $f^{\star}$ is radially symmetrical and non-increasing in $\lvert x \rvert$. Moreover, it is an easy exercise to check that $(f^{\star})^* = f^*$ and also that the operation $f \mapsto f^{\star}$ commutes with the dilation operator $D_a$, i.e.
\begin{equation*}
	(D_a f )^{\star} = D_a(f^{\star}).
\end{equation*}
Those two properties are in fact the motivation behind \eqref{DRNIR1}.

\section{Quasi-Banach function spaces} \label{CHQBFS}

The core observation of this section is the following lemma. Although it is in fact quite simple to prove, it is extremely useful as it provides the critical insight needed in order to generalise the standard proofs from the theory of normed spaces.

\begin{lemma} \label{NT}
	Let $X$ be a quasinormed space equipped with the quasinorm $\lVert \cdot \rVert_X$ and denote by $C$ its modulus of concavity. Let $x_n$ be a sequence of points in $X$. Then
	\begin{equation*}
		\left \lVert \sum_{n = 0}^{N} x_n \right \rVert_X \leq \sum_{n = 0}^{N} C^{n+1} \lVert x_n \rVert_X
	\end{equation*}
	for every $N \in \mathbb{N}$.
\end{lemma}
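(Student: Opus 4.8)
The plan is to prove the estimate by induction on $N$, exploiting the quasi-triangle inequality from axiom \ref{DQ3} and peeling off one term at a time. First I would verify the base case: for $N = 0$ the claimed inequality reads $\lVert x_0 \rVert_X \leq C \lVert x_0 \rVert_X$, which holds trivially since $C \geq 1$. This already hints at why the powers $C^{n+1}$ (rather than $C^n$) appear on the right-hand side.

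For the inductive step, suppose the inequality holds for some $N$, and consider the partial sum up to $N+1$. The natural move is to split off the last term and apply the modulus of concavity once:
\begin{equation*}
	\left \lVert \sum_{n=0}^{N+1} x_n \right \rVert_X = \left \lVert \left ( \sum_{n=0}^{N} x_n \right ) + x_{N+1} \right \rVert_X \leq C \left ( \left \lVert \sum_{n=0}^{N} x_n \right \rVert_X + \lVert x_{N+1} \rVert_X \right ).
\end{equation*}
Invoking the induction hypothesis on the first term inside the parentheses then yields
\begin{equation*}
	\left \lVert \sum_{n=0}^{N+1} x_n \right \rVert_X \leq C \sum_{n=0}^{N} C^{n+1} \lVert x_n \rVert_X + C \lVert x_{N+1} \rVert_X = \sum_{n=0}^{N} C^{n+2} \lVert x_n \rVert_X + C \lVert x_{N+1} \rVert_X.
\end{equation*}

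The main point to watch is that this last expression must be dominated by the target $\sum_{n=0}^{N+1} C^{n+1} \lVert x_n \rVert_X$. The final term matches exactly, since the coefficient of $\lVert x_{N+1} \rVert_X$ is $C = C^{(N+1)+1}$ only when $N=0$; in general it is $C$, whereas the target coefficient is $C^{N+2}$. Because $C \geq 1$ we have $C \leq C^{N+2}$, so the last term is safely bounded. For the terms with $n \leq N$ the induction produced coefficient $C^{n+2}$, which is precisely the target coefficient $C^{(n)+1}$ shifted correctly — here one must be careful with the index bookkeeping, and the cleanest route is to observe termwise that each coefficient on the left is at most the corresponding target coefficient. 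This monotonicity in $C$ is the only subtlety; there is no genuine obstacle, as the quasi-triangle inequality applied repeatedly with the crude bound $C \geq 1$ closes the argument immediately.
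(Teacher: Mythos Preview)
Your inductive step fails at the very point you flag as ``the only subtlety''. After peeling off the last term and invoking the induction hypothesis you arrive at
\[
\sum_{n=0}^{N} C^{n+2}\lVert x_n\rVert_X + C\,\lVert x_{N+1}\rVert_X,
\]
and you want this to be bounded by the target
\[
\sum_{n=0}^{N} C^{n+1}\lVert x_n\rVert_X + C^{N+2}\lVert x_{N+1}\rVert_X.
\]
For the last term you correctly observe $C \le C^{N+2}$. But for each $n \le N$ your coefficient is $C^{n+2}$ while the target coefficient is $C^{n+1}$, and since $C \ge 1$ the inequality goes the \emph{wrong} way: $C^{n+2} \ge C^{n+1}$. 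Your bound is larger than the target, not smaller. A concrete counterexample to the step: take $C>1$, $x_{N+1}=0$ and $x_0 \neq 0$.

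The repair is simple but essential: peel off the \emph{first} term rather than the last. This is exactly what the paper does. Writing
\[
\left\lVert \sum_{n=0}^{N+1} x_n \right\rVert_X \le C\lVert x_0\rVert_X + C\left\lVert \sum_{n=1}^{N+1} x_n \right\rVert_X
\]
and applying the induction hypothesis to the shifted sequence $y_m = x_{m+1}$, $m=0,\dots,N$, gives $\left\lVert \sum_{n=1}^{N+1} x_n \right\rVert_X \le \sum_{n=1}^{N+1} C^{n}\lVert x_n\rVert_X$, and multiplying by $C$ yields precisely $\sum_{n=0}^{N+1} C^{n+1}\lVert x_n\rVert_X$, with no slack needed. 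The asymmetry in the exponents $C^{n+1}$ reflects which end of the sum the extra factors of $C$ accumulate on; splitting from the front is what makes the bookkeeping close exactly.
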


\begin{proof}
	The estimate follows by fixing an $N \in \mathbb{N}$ and using the triangle inequality, up to a multiplicative constant, of $\lVert \cdot \rVert_X$ $(N+1)$ times to obtain
	\begin{equation*}
		\left \lVert \sum_{n = 0}^{N} x_n \right \rVert_X \leq C \lVert x_0 \rVert_X + C \left \lVert \sum_{n = 1}^{N} x_n \right \rVert_X \leq \dots  \leq \sum_{n = 0}^{N} C^{n+1} \lVert x_n \rVert_X.
	\end{equation*}
\end{proof}

\subsection{Generalised Riesz--Fischer theorem} \label{SGRF}

Firstly, we use Lemma~\ref{NT} to prove a generalised version of the classical Riesz--Fischer theorem. 

Let us first define a generalisation of the classical Riesz--Fischer property.

\begin{definition}
	Let $X$ be a quasinormed space equipped with the quasinorm $\lVert \cdot \rVert_X$ and let $C \in [1,\infty)$. We say that $X$ has the generalised Riesz--Fischer property with constant $C$ if for every sequence $x_n$ of points in $X$ that satisfies
	\begin{equation*}
		\sum_{n = 0}^{\infty} C^{n+1} \lVert x_n \rVert_X < \infty
	\end{equation*}
	there is a point $x \in X$ such that
	\begin{equation*}
		\lim_{N \rightarrow \infty} \sum_{n = 0}^{N} x_n = x
	\end{equation*}
	in the quasinormed topology of $X$.
	
	We further say that $X$ has the strong generalised Riesz--Fischer property, if every sequence $x_n$ of points in $X$ for which there exists a constant $\widetilde{C} \in (1,\infty)$ such that
	\begin{equation*}
		\sum_{n = 0}^{\infty} \widetilde{C}^{n+1} \lVert x_n \rVert_X < \infty
	\end{equation*}
	satisfies, that	there is a point $x \in X$ such that
	\begin{equation*}
		\lim_{N \rightarrow \infty} \sum_{n = 0}^{N} x_n = x
	\end{equation*}
	in the quasinormed topology of $X$.
\end{definition}

It is obvious that the generalised Riesz--Fischer property with constant $C=1$ is simply the classical Riesz--Fischer property, that the property gets weaker as $C$ becomes larger, and that the strong generalised Riesz--Fischer property implies the generalised Riesz--Fischer property for every constant $C > 1$. However, as follows from the following theorem, the only special case is the classical Riesz--Fischer property, everything else is mutually equivalent and also equivalent to the completeness of the quasinormed space in question.

\begin{theorem} \label{QRF}
	Let $X$ be a quasinormed space equipped with the quasinorm $\lVert \cdot \rVert_X$ and denote by $C_X$ its modulus of concavity. Then the following statements are equivalent:
	\begin{enumerate}
		\item $X$ is complete. \label{QRFi}
		\item $X$ has the generalised Riesz--Fischer property with some constant $C \in [1, \infty)$. \label{QRFii}
		\item $X$ has the generalised Riesz--Fischer property with constant $C_X$. \label{QRFiii}
		\item $X$ has the generalised Riesz--Fischer property with every constant $C \in (1, \infty)$. \label{QRFiv}
		\item $X$ has the strong generalised Riesz--Fischer property. \label{QRFv}
	\end{enumerate} 
\end{theorem}

\begin{proof}
	It is evident that \ref{QRFv} $\implies$ \ref{QRFiv} $\implies$ \ref{QRFiii} $\implies$ \ref{QRFii}.
	
	\ref{QRFii} $\implies$ \ref{QRFi}: Suppose that the space $X$ has the Riesz--Fischer property with arbitrary constant $C \in [1, \infty)$ and fix some Cauchy sequence $x_n$. Proceed to find a non-decreasing and unbounded sequence of natural numbers $k_n$ such that it holds for all natural numbers $i, j \geq k_n$ that
	\begin{equation*}
		\lVert x_i - x_j \rVert_X \leq (2C)^{-n-2}.
	\end{equation*}
	Now, let us consider sequence $y_n$ of points in $X$ defined by
	\begin{align*}
		y_0 &= x_{k_0}, \\
		y_n &= x_{k_n} - x_{k_{n-1}} \text{ for } n \geq 1.
	\end{align*}
	Then the sequence $y_n$ satisfies
	\begin{equation*} 
		\sum_{n = 0}^{\infty} C^{n+1} \lVert y_n \rVert_X \leq C \lVert x_{k_0} \rVert + \sum_{n = 1}^{\infty} C^{n+1} \lVert x_{k_n} - x_{k_{n-1}} \rVert_X \leq C \lVert x_{k_0} \rVert + \sum_{n = 1}^{\infty} 2^{-n-1} < \infty,
	\end{equation*}
	which means that, by our assumption on $X$, there is some limit $y \in X$ of the sequence $\sum_{n = 0}^{N} y_n$. Since
	\begin{equation*}
		\sum_{n = 0}^{N} y_n = x_{k_0} + \sum_{n = 1}^{N} x_{k_n} - x_{k_{n-1}} = x_{k_N}
	\end{equation*}
	we have shown that the sequence $x_n$ has a convergent subsequence with limit $y$. Because $x_n$ is Cauchy, the standard argument yields that $y$ is also the limit of $x_n$. 
	
	\ref{QRFi} $\implies$ \ref{QRFv}: Suppose that $X$ is complete and that $x_n$ is a sequence of points in $X$ for which there exists some constant $\widetilde{C} \in (1,\infty)$ such that
	\begin{equation*}
		\sum_{n = 0}^{\infty} \widetilde{C}^{n+1} \lVert x_n \rVert_X < \infty.
	\end{equation*}
	Then we might find some $K \in \mathbb{N}$, $K>0$, such that $\widetilde{C}^K \geq C_X$. Consider now for $i \in \{0, \dots, K-1\}$ the sequences $y_{i,n}$ defined pointwise (for $n \in \mathbb{N}$) as
	\begin{equation*}
		y_{i,n} = x_{Kn + i}.
	\end{equation*}
	That is, we have decomposed $x_n$ uniformly into $K$ subsequences. Each $y_{i,n}$ now satisfies
	\begin{equation} \label{QRF_1}
		\sum_{n=0}^{\infty} C_X^{n+1} \lVert y_{i,n} \rVert_X \leq \widetilde{C}^K \sum_{n=0}^{\infty} \widetilde{C}^{Kn} \lVert x_{Kn + i} \rVert_X \leq \widetilde{C}^K \sum_{n = 0}^{\infty} \widetilde{C}^{n+1} \lVert x_n \rVert_X < \infty.
	\end{equation}
	Next, we want to prove that the partial sums of the original series are Cauchy. Consider thus some arbitrary natural numbers $N \leq M$ and put as $N_0$ the largest natural number such that $KN_0 \leq N$, while $M_0$ will be the smallest natural number such that $M \leq KM_0$. Then, by decomposing the elements $x_n$, $N \leq n \leq M$, into $K$ groups according to their membership in the subsequences $y_{i,n}$ and applying first $K$ times the triangle inequality up to a constant and then Lemma~\ref{NT}, we obtain
	\begin{equation*}
		\left \lVert \sum_{n = 0}^N x_n - \sum_{n = 0}^M x_n \right \rVert_X \leq C_X^K \sum_{i=0}^{K-1} \sum_{n = N_0}^{M_0} C_X^{n - N_0 + 1} \lVert y_{i,n} \rVert_X \leq C_X^K \sum_{i=0}^{K-1} \sum_{n = N_0}^{\infty} C_X^{n + 1} \lVert y_{i,n} \rVert_X.
	\end{equation*}
	Thanks to \eqref{QRF_1}, we can make the right-hand side arbitrarily small by taking $N_0$ sufficiently large. Hence, the partial sums are Cauchy and our assumption of completeness ensures that the series converges.
\end{proof}

This result has a significant overlap with \cite[Theorem~1.1]{Maligranda04} (we would like to thank Professor Maligranda for making us aware of this). Significantly restricted versions were obtained earlier, see for example \cite[Lemma~101.1]{Zaanen83} or \cite{HalperinLuxemburg56}.

\subsection{Basic properties of quasi-Banach function norms} \label{SBPQBFS}

We now turn our attention to quasi-Banach function spaces as defined in Section~\ref{SBFNQN} and show that they have the same basic properties as their normed counterparts. For the proofs of the classical versions of these results see \cite[Chapter~1, Section~1]{BennettSharpley88}.

The first result relates quasi-Banach function spaces with the set of simple functions and $M_0$. 

\begin{theorem} \label{TEiMF}
	Let $(X, \lVert \cdot \rVert_X)$ be a quasi-Banach function space. Then $X$ is a linear space satisfying 
	\begin{equation*}
		S \subseteq X \hookrightarrow M_0
	\end{equation*}
	where $S$ denotes the set of all simple functions (supported on a set of finite measure) and $M_0$ is equipped with the topology of convergence in measure on the sets of finite measure. 
\end{theorem}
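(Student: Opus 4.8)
The plan is to verify the three assertions in turn, relying throughout on the defining axioms of a quasi-Banach function norm and on the identity $\lVert f \rVert_X = \lVert \, \lvert f \rvert \, \rVert_X$, which lets me transfer the axioms, stated on $M_+$, to arbitrary $f \in M$. First I would settle the two inclusions that hold unconditionally. Closure of $X$ under scalar multiplication is immediate from positive homogeneity, and closure under addition follows by combining the lattice property \ref{P2} with the quasi-triangle inequality from \ref{Q1}: since $\lvert f + g \rvert \leq \lvert f \rvert + \lvert g \rvert$ pointwise, one gets $\lVert f + g \rVert_X \leq C(\lVert f \rVert_X + \lVert g \rVert_X) < \infty$. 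For $S \subseteq X$, any simple function supported on a set of finite measure is a finite combination $\sum_{i} a_i \chi_{E_i}$ with $\mu(E_i) < \infty$; by \ref{P4} each $\lVert \chi_{E_i} \rVert_X$ is finite, and Lemma~\ref{NT} together with positive homogeneity bounds $\lVert \sum_i a_i \chi_{E_i} \rVert_X$ by a finite sum, so the simple function lies in $X$.

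Next I would prove $X \subseteq M_0$, i.e.\ that every $f \in X$ is finite $\mu$-a.e. Here the $\sigma$-finiteness of $(R,\mu)$ is essential. Arguing by contradiction, if $\lvert f \rvert = \infty$ on a set of positive measure, then by $\sigma$-finiteness there is a subset $E$ with $0 < \mu(E) < \infty$ on which $\lvert f \rvert = \infty$; hence $n \chi_E \leq \lvert f \rvert$ for every $n$, and the lattice property \ref{P2} together with positive homogeneity gives $n \lVert \chi_E \rVert_X \leq \lVert f \rVert_X$. Since $\mu(E) > 0$ forces $\lVert \chi_E \rVert_X > 0$ by the definiteness condition in \ref{Q1}, while \ref{P4} guarantees $\lVert \chi_E \rVert_X < \infty$, letting $n \to \infty$ contradicts $\lVert f \rVert_X < \infty$.

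Finally, assuming \ref{P5}, I would establish the continuous embedding $X \hookrightarrow M_0$, where $M_0$ carries the topology of convergence in measure on sets of finite measure. Since both the topology of $X$ (via the Aoki--Rolewicz theorem) and this topology on $M_0$ are metrisable, it suffices to check sequential continuity of the identity map. Given $f_k \to f$ in $X$ and a set $E$ with $\mu(E) < \infty$, axiom \ref{P5} yields $\int_E \lvert f_k - f \rvert \, d\mu \leq C_E \lVert f_k - f \rVert_X \to 0$, so $f_k \to f$ in $L^1(E)$; Markov's inequality $\mu(\{ t \in E : \lvert f_k - f \rvert > \varepsilon \}) \leq \varepsilon^{-1}\int_E \lvert f_k - f \rvert \, d\mu$ then upgrades this to convergence in measure on $E$. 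As $E$ was arbitrary, $f_k \to f$ in $M_0$, which is the desired continuity.

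The steps are individually short, and the one deserving the most care is the last: I must be explicit that continuity of a map between these topological vector spaces reduces to sequential continuity (justified by metrisability), and that \ref{P5} is precisely the ingredient converting quasinorm convergence into local $L^1$-convergence, and hence convergence in measure on sets of finite measure. The role of $\sigma$-finiteness in the $X \subseteq M_0$ step is the other point where the hypotheses must be invoked deliberately rather than taken for granted.
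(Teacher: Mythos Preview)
Your proof is correct and follows the standard route that the paper itself invokes: the paper's proof consists entirely of pointers to \cite[Chapter~1, Theorem~1.4]{BennettSharpley88} for the parts involving linearity, $S \subseteq X$, and the continuous embedding under \ref{P5}, and to \cite[Lemma~2.4]{MizutaNekvinda15} for the inclusion $X \subseteq M_0$; what you have written is precisely the content of those references spelled out. The only cosmetic remark is that in the $X \subseteq M_0$ step the appeal to \ref{P4} is not strictly needed (positivity of $\lVert \chi_E \rVert_X$ alone already forces $\lVert f \rVert_X = \infty$), but including it does no harm.
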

 
Our proof of this result is inspired by that of \cite[Chapter~II, Section~2, Theorem~1]{KreinPetunin82}. We would also like to note that \cite[Chapter~1, Theorem~1.4]{BennettSharpley88} contains an easier proof for the special case when $\lVert \cdot \rVert_X$ satisfies \ref{P5} and that the set theoretical inclusion $X \subseteq \mathcal{M}_0$ is contained in \cite[Lemma~2.4]{MizutaNekvinda15} (in both cases the results are formulated with different assumptions but the proofs translate verbatim to our setting).
	
We will need the following measure-theoretical lemma.

\begin{lemma} \label{LST}
	Let $E \subseteq \mathcal{R}$ satisfy $0 < \mu(E) < \infty$, let $\delta \in (0, \mu(E))$ be fixed, and let $F_n$ be a sequence of subsets of $E$ satisfying $\mu(F_n) > \delta$ for every $n \in \mathbb{N}$.
	Then the pointwise sum $\sum_{n=0}^{\infty} \chi_{F_n}$ satisfies
	\begin{equation*}
		\mu \left ( \left \{ \sum_{n=0}^{\infty} \chi_{F_n} = \infty \right \} \right ) > 0.
	\end{equation*}
\end{lemma}

\begin{proof}
	Assume contrary, i.e.~that we have a sequence $F_n$ of subsets of $E$ that satisfies  $\mu(F_n) > \delta$ for every $n \in \mathbb{N}$, where $\delta$ is the number fixed in the formulation, while
	\begin{equation*}
		\mu \left ( \left \{ \sum_{n=0}^{\infty} \chi_{F_n} = \infty \right \} \right ) = 0.
	\end{equation*}
	Then for every $\varepsilon \in (0, \mu(E))$ there is some $N_{\varepsilon} \in \mathbb{N}$ such that
	\begin{align*}
		\mu \left ( \left \{ \sum_{n=0}^{\infty} \chi_{F_n} > N_{\varepsilon} \right \} \right ) & < \frac{\varepsilon}{2}.
	\end{align*}
	Put $\varepsilon = \delta$. Then, by the monotone convergence theorem,
	\begin{equation*}
		\begin{split}
			N_{\delta} \mu(E)  &\geq \int_{\left \{ \sum_{n=0}^{\infty} \chi_{F_n} \leq N_{\delta} \right \} } \sum_{n=0}^{\infty} \chi_{F_n} \: d\mu \\
			&= \sum_{n=0}^{\infty} \int_{\left \{ \sum_{n=0}^{\infty} \chi_{F_n} \leq N_{\delta} \right \} }  \chi_{F_n} \: d\mu\\
			&= \sum_{n=0}^{\infty} \mu\left ( F_n \cap \left \{ \sum_{n=0}^{\infty} \chi_{F_n} \leq N_{\delta} \right \} \right ) \\
			&\geq \sum_{n=0}^{\infty} \left ( \mu(F_n) - \frac{\delta}{2} \right ).
		\end{split}
	\end{equation*}
	It follows that there is some $n_0 \in \mathbb{N}$ such that $\mu(F_{n_0}) < \delta$. 
\end{proof}

\begin{proof}[Proof of Theorem~\ref{TEiMF}]
	The inclusion $S \subseteq X$ is trivial so we will only prove the embedding $X \hookrightarrow M_0$. We first note that $X \subseteq \mathcal{M}_0$ in the set-theoretical sense, as follows from the following argument.
	
	Let $f \in \mathcal{M}$ and let $E = \{\lvert f \rvert = \infty\}$. If $\mu(E) > 0$ then it follows from the part \ref{Q1b} of \ref{Q1} that $\lVert \chi_E \rVert_X > 0$ (regardless of whether $\mu(E)$ is finite; we neither claim nor need finiteness of the quasinorm). As \ref{P2} and part \ref{Q1a} of \ref{Q1} imply $\lVert f \rVert_X \geq n \lVert \chi_E \rVert_X$ for every $n \in \mathbb{N}$, we conclude that $\lVert f \rVert_X = \infty$.
	
	It remains to show that the embedding is continuous, i.e.~that for every sequence $f_n$ in $X$ we have that $\lVert f_n \rVert_X \to 0$ as $n \to 0$ implies that $f_n$ converge to zero in measure on every set of finite measure. We prove this statement by contradiction.
	
	Let $C$ be the modulus of concavity of $\lVert \cdot \rVert_X$. Assume that $f_n$ is a sequence in $X$ such that $\lVert f_n \rVert_X \to 0$ as $n \to 0$ and that there is a set $E_0 \subseteq R$ with $0 < \mu(E_0) < \infty$, $\varepsilon \in (0, \infty)$, and $\delta \in (0, \infty)$ such that the sets 
	\begin{align*}
		F_n = \{x \in E_0 ; \; \lvert f_n(x) \rvert > \varepsilon \}
	\end{align*}
	satisfy $\mu(F_n) > \delta$ for every $n \in \mathbb{N}$. We may assume without loss of generality that $\lVert f_n \rVert_X \leq C^{-n-1} 2^{-n-1}$ for every $n \in \mathbb{N}$. Then it follows from Lemma~\ref{NT}, the properties \ref{P2} and \ref{P3} of $\lVert \cdot \rVert_X$, and part \ref{Q1a} of its property \ref{Q1} that the pointwise sum $\sum_{n=0}^{\infty} \chi_{F_n}$ satisfies
	\begin{equation*}
		\left \lVert \sum_{n=0}^{\infty} \chi_{F_n} \right \rVert_X = \lim_{N \to \infty} \left \lVert \sum_{n=0}^{N} \chi_{F_n} \right \rVert_X \leq \lim_{N \to \infty} \sum_{n=0}^{N} C^{n+1} \left  \lVert  \chi_{F_n} \right \rVert_X \leq \lim_{N \to \infty} \sum_{n=0}^{N} C^{n+1} \left  \lVert  \frac{1}{\varepsilon} f_n \right \rVert_X \leq \frac{1}{\varepsilon}.
	\end{equation*}
	Hence, $\sum_{n=0}^{\infty} \chi_{F_n} \in X$. Note that the sum is defined pointwise, we do not make any claims about convergence in $X$. The already proved set-theoretical inclusion $X \subseteq \mathcal{M}_0$ now asserts that $\sum_{n=0}^{\infty} \chi_{F_n}$ is finite almost everywhere; however, this statement contradicts Lemma~\ref{LST}.
\end{proof}

The next result is a version of Fatou's lemma.

\begin{lemma} \label{LF}
	Let $(X, \lVert \cdot \rVert_X)$ be a quasi-Banach function space. Consider a sequence $f_n$ of functions in $X$ and $f \in X$. Then the following two assertions hold.
	\begin{enumerate}
		\item If $0 \leq f_n \uparrow f$ $\mu$-a.e., then either $f \notin X$ and $\lVert f_n \rVert_X \uparrow \infty$ or $f \in X$ and $\lVert f_n \rVert_X \uparrow \lVert f \rVert_X$. \label{LFp1}
		\item If $f_n \rightarrow f$ $\mu$-a.e.~and $\liminf_{n \rightarrow \infty} \lVert f_n \rVert_X < \infty$, then $f \in X$ and
		\begin{equation*}
			\lVert f \rVert_X \leq \liminf_{n \rightarrow \infty} \lVert f_n \rVert_X.
		\end{equation*} \label{LFp2}
	\end{enumerate}
\end{lemma}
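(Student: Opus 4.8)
The plan is to treat the two parts separately, deriving the first directly from the Fatou property \ref{P3} and then bootstrapping it to obtain the second via the classical infimum trick. The quasinorm structure (and hence the modulus of concavity) will play no role whatsoever, so the argument can mirror the normed case essentially verbatim.

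For the first assertion I would simply invoke \ref{P3}. Since $0 \le f_n \uparrow f$ $\mu$-a.e., the Fatou property gives $\lVert f_n \rVert_X \uparrow \lVert f \rVert_X$ in $[0,\infty]$. It then only remains to split according to whether the supremum $\lVert f \rVert_X$ is finite: if $\lVert f \rVert_X = \infty$ then $f \notin X$ and $\lVert f_n \rVert_X \uparrow \infty$, whereas if $\lVert f \rVert_X < \infty$ then $f \in X$ and $\lVert f_n \rVert_X \uparrow \lVert f \rVert_X$. This is nothing more than unwinding the definition of $X$ as the set on which the functional is finite.

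For the second assertion I would reduce to the monotone situation just handled. Passing to absolute values, I use that $f_n \to f$ $\mu$-a.e.\ forces $\lvert f_n \rvert \to \lvert f \rvert$ $\mu$-a.e., and I set
\begin{equation*}
	g_n = \inf_{k \ge n} \lvert f_k \rvert \in M_+.
\end{equation*}
Because the convergence is genuine pointwise convergence a.e., the sequence $g_n$ is non-decreasing and satisfies $0 \le g_n \uparrow \liminf_{k} \lvert f_k \rvert = \lvert f \rvert$ $\mu$-a.e. Two ingredients then finish the argument. First, $g_n \le \lvert f_k \rvert$ for every $k \ge n$, so the lattice property \ref{P2} yields $\lVert g_n \rVert_X \le \lVert f_k \rVert_X$ for all such $k$, whence $\lVert g_n \rVert_X \le \inf_{k \ge n} \lVert f_k \rVert_X$. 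Second, applying the already-proved first part of the lemma to the sequence $g_n$ (together with the identity $\lVert\,\lvert f \rvert\,\rVert_X = \lVert f \rVert_X$) gives $\lVert g_n \rVert_X \uparrow \lVert f \rVert_X$.

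Combining the two estimates, I obtain
\begin{equation*}
	\lVert f \rVert_X = \lim_{n \to \infty} \lVert g_n \rVert_X \le \lim_{n \to \infty} \inf_{k \ge n} \lVert f_k \rVert_X = \liminf_{n \to \infty} \lVert f_n \rVert_X,
\end{equation*}
which is finite by hypothesis; in particular $f \in X$ and the claimed inequality holds. I do not anticipate a genuine obstacle here: the only points requiring care are the measurability of $g_n$ (an infimum of countably many measurable functions) and the verification that $g_n \uparrow \lvert f \rvert$, which is exactly where the almost-everywhere convergence, rather than a mere $\liminf$, is needed. Everything else is a formal consequence of \ref{P2} and \ref{P3}.
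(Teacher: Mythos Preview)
Your proposal is correct and is precisely the classical argument from \cite[Chapter~1, Lemma~1.5]{BennettSharpley88} that the paper itself defers to; indeed, the paper omits the proof entirely, noting that it is identical to the normed case. Your remark that the modulus of concavity plays no role is exactly the reason the argument carries over verbatim.
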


The proof is omitted since it is the same as in the classical case which can be found in \cite[Chapter 1, Lemma 1.5]{BennettSharpley88}.

The following result establishes the Riesz--Fischer property, with an appropriate constant, of quasi-Banach function spaces. We believe this result to be interesting in its own right because its applications are not limited to proving completeness.

\begin{theorem} \label{TC}
	Let $(X, \lVert \cdot \rVert_X)$ be a quasi-Banach function space. Denote by $C$ the modulus of concavity of $\lVert \cdot \rVert_X$. Then $X$ has the Riesz--Fischer property with constant $C$.
	
	Furthermore, let $f_n$ be a sequence in $X$ such that
	\begin{equation*}
		\sum_{n = 0}^{\infty} C^{n+1} \lVert f_n \rVert_X < \infty
	\end{equation*}
	and put $f = \sum_{n = 0}^{\infty} f_n$ in $X$. Then also $f = \sum_{n = 0}^{\infty} f_n$  $\mu$-a.e.
\end{theorem}

\begin{proof}
	Fix some sequence $f_n$ in $X$ such that
	\begin{equation}
		\sum_{n = 0}^{\infty} C^{n+1} \lVert f_n \rVert_X < \infty. \label{TC1}
	\end{equation}
	Denote by $t$ and $t_N$ the following pointwise sums:
	\begin{align*}
		t &= \sum_{n=0}^{\infty} \lvert f_n \rvert, \\
		t_N &= \sum_{n=0}^{N} \lvert f_n \rvert.
	\end{align*}
	Then $t_N \uparrow t$ and since it holds by Lemma~\ref{NT} that
	\begin{equation*}
		\lVert t_N \rVert_X \leq \sum_{n = 0}^{N} C^{n+1} \lVert f_n \rVert_X \leq \sum_{n = 0}^{\infty} C^{n+1} \lVert f_n \rVert_X < \infty,
	\end{equation*}
	we get by part~\ref{LFp1} of Lemma~\ref{LF} that $t \in X$. Thanks to Theorem~\ref{TEiMF} the series $\sum_{n=0}^{\infty} \lvert f_n \rvert$ converges almost everywhere and therefore the series $\sum_{n=0}^{\infty} f_n$ does too.
	
	Denote now by $f$ and $s_N$ the following pointwise sums:
	\begin{align*}
		f &= \sum_{n=0}^{\infty} f_n, \\
		s_N &= \sum_{n=0}^{N} f_n.
	\end{align*}
	Then $s_N \rightarrow f$ $\mu$-a.e., hence, for any $M$, we get that $s_N - s_M \rightarrow f - s_M$ $\mu$-a.e.~as $N \rightarrow \infty$. Furthermore, using Lemma~\ref{NT} again, we get that
	\begin{equation*}
		\liminf_{N \rightarrow \infty} \lVert s_N - s_M \rVert_X \leq \liminf_{N \rightarrow \infty} \sum_{n = M+1}^{N} C^{n+1} \lVert f_n \rVert_X \leq \sum_{n = M+1}^{\infty} C^{n+1} \lVert f_n \rVert_X,
	\end{equation*}
	which tends to $0$ as $M \rightarrow \infty$ thanks to \eqref{TC1}. Therefore, if follows from part~\eqref{LFp2} of Lemma~\ref{LF} that $f - s_M \in X$ (which implies that $f \in X$ too) and also that $\lVert f - s_M \rVert_X \rightarrow 0$ as $M \rightarrow \infty$.
\end{proof}

The most obvious application of this result is the completeness of quasi-Banach function spaces, which follows via Theorem~\ref{QRF}. This result was first obtained by Caetano, Gogatishvili and Opic in \cite{CaetanoGogatishvili16} using a different method.

\begin{corollary}
	Let $(X,\lVert \cdot \rVert_X)$ be a quasi-Banach function space. Then it is complete.
\end{corollary}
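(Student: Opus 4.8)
The plan is to obtain completeness as an immediate consequence of the two preceding theorems, since all the substantive work has already been carried out there. First I would note that, by Theorem~\ref{TEiMF}, the space $X$ is a linear space, and it is equipped with the quasinorm $\lVert \cdot \rVert_X$ by axiom~\ref{Q1}; hence $(X, \lVert \cdot \rVert_X)$ is a genuine quasinormed space in the sense of Section~\ref{SNQ}, to which the abstract results of Section~\ref{SGRF} apply. Let $C$ denote the modulus of concavity of $\lVert \cdot \rVert_X$.

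The first step is to invoke Theorem~\ref{TC}, which asserts precisely that $X$ has the Riesz--Fischer property with constant $C$. Since $C \in [1, \infty)$, this places us in the situation covered by the second part of Theorem~\ref{QRF}: a quasinormed space that enjoys the Riesz--Fischer property with some constant $C' \in [1, \infty)$ is complete. Taking $C' = C$ then yields that $(X, \lVert \cdot \rVert_X)$ is complete, which is the desired conclusion.

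There is essentially no obstacle to overcome at this stage, as the corollary is a formal chaining of Theorem~\ref{TC} (which supplies the Riesz--Fischer property) with Theorem~\ref{QRF} (which converts it into completeness). The only point that warrants a moment's care is the verification that $X$ is indeed a quasinormed linear space, so that Theorem~\ref{QRF} is genuinely applicable; this is guaranteed by Theorem~\ref{TEiMF} together with the defining axiom~\ref{Q1}. The real content lies in Theorem~\ref{TC}, whose proof rests on Lemma~\ref{NT} to control the modulus of concavity across finite sums and on the Fatou-type Lemma~\ref{LF} to pass to the limit; once those are in hand, the present statement follows with no further calculation.
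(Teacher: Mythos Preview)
Your proposal is correct and follows essentially the same approach as the paper: the corollary is obtained by combining Theorem~\ref{TC}, which establishes the Riesz--Fischer property with constant $C$, with part~(2) of Theorem~\ref{QRF}, which converts this property into completeness. Your additional remark that Theorem~\ref{TEiMF} together with axiom~\ref{Q1} ensures $(X,\lVert \cdot \rVert_X)$ is a genuine quasinormed linear space is a reasonable point of care, though the paper leaves this implicit.
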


A second application of Theorem~\ref{TC} is the following result that characterises the boundedness of those quasilinear operators that are in some sense compatible with the structure of quasi-Banach function spaces.

\begin{theorem} \label{TOQBFS}
	Let $(X,\lVert \cdot \rVert_X)$ and $(Y,\lVert \cdot \rVert_Y)$ be quasi-Banach function spaces over $M(R, \mu)$ and $M(S, \nu)$, respectively, and let $T: M(R, \mu) \rightarrow M(S, \nu)$ be a quasilinear operator such that the following two conditions hold:
	\begin{enumerate}
		\item \label{T1}
		There is a constant $c_1 > 0$ such that it holds for all $f \in M(R, \mu)$ that $\lvert T(f) \rvert \leq c_1 T(\lvert f \rvert)$ $\nu$-a.e.~on $S$.
		\item \label{T2}
		There is a constant $c_2 > 0$ such that it holds for all $f, g \in M(R, \mu)$ satisfying $\lvert f \rvert \leq \lvert g \rvert$ $\mu$-a.e.~on $R$ that $\lvert T(f) \rvert \leq c_2 \lvert T(g) \rvert$ $\nu$-a.e.~on $S$.
	\end{enumerate}
	Then $T:X \rightarrow Y$ is bounded, in the sense that there is a finite constant $C_T >0$ such that $\lVert T(f) \rVert_Y \leq C_T \lVert f \rVert_X$ for all $f \in X$, if and only if $T(f) \in Y$ for all $f \in X$.
\end{theorem}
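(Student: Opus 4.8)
The plan is to prove the two implications separately, the forward one being immediate and the reverse being the heart of the matter. If $T \colon X \to Y$ is bounded, then $\lVert T(f) \rVert_Y \le C_T \lVert f \rVert_X < \infty$ for every $f \in X$, so $T(f) \in Y$; this settles the easy direction. For the converse I would argue by contraposition: assuming $T(f) \in Y$ for all $f \in X$ but that $T$ fails to be bounded, I would manufacture a single function $f \in X$ whose image $T(f)$ is forced to lie outside $Y$, contradicting the hypothesis.

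To set this up, I would first exploit condition \ref{T1} to reduce to nonnegative inputs: since $\lVert T(g) \rVert_Y \le c_1 \lVert T(\lvert g \rvert) \rVert_Y$ by the lattice property \ref{P2} (note \ref{T1} forces $T(\lvert g \rvert) \ge 0$) together with $\lVert\, \lvert g \rvert\, \rVert_X = \lVert g \rVert_X$, the unboundedness of $T$ persists when attention is restricted to nonnegative functions. Because $T$ is quasilinear and hence positively homogeneous, the ratio $\lVert T(h) \rVert_Y / \lVert h \rVert_X$ is invariant under positive scaling, so unboundedness means this ratio admits no finite upper bound over nonzero nonnegative $h$. Consequently, for any sequence $b_n$ prescribed in advance, I can select nonnegative $h_n \in X$ with $\lVert h_n \rVert_X = 1$ and $\lVert T(h_n) \rVert_Y \ge b_n$.

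The key step is to assemble the $h_n$ into one function using the Riesz--Fischer property. Writing $C$ for the modulus of concavity of $\lVert \cdot \rVert_X$, I would set $\lambda_n = C^{-(n+1)}(n+1)^{-2}$ and prescribe $b_n = (n+1)^3 C^{n+1}$, so that $\sum_{n=0}^\infty C^{n+1} \lVert \lambda_n h_n \rVert_X = \sum_{n=0}^\infty (n+1)^{-2} < \infty$. Theorem~\ref{TC} then yields an $f \in X$ with $\sum_{n=0}^N \lambda_n h_n \to f$ in $X$; since the summands are nonnegative, the partial sums increase pointwise, whence $f = \sum_{n=0}^\infty \lambda_n h_n$ $\mu$-a.e.\ and in particular $0 \le \lambda_n h_n \le f$ for every $n$. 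Now condition \ref{T2} gives $\lvert T(\lambda_n h_n) \rvert \le c_2 \lvert T(f) \rvert$ $\nu$-a.e., and positive homogeneity rewrites the left-hand side as $\lambda_n \lvert T(h_n) \rvert$. Taking $Y$-quasinorms and invoking \ref{P2} once more produces $\lambda_n \lVert T(h_n) \rVert_Y \le c_2 \lVert T(f) \rVert_Y$.

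Finally I would extract the contradiction. With the chosen constants, $\lambda_n \lVert T(h_n) \rVert_Y \ge \lambda_n b_n = n+1 \to \infty$, whereas the right-hand side $c_2 \lVert T(f) \rVert_Y$ is a fixed finite number because $T(f) \in Y$ by hypothesis; this is impossible, so $T$ must be bounded. The main obstacle is the balancing act in the choice of weights: the Riesz--Fischer condition forces $\sum C^{n+1} \lambda_n$ to converge, which suppresses $\lambda_n$ geometrically, and the argument closes only because unboundedness permits us to prescribe $\lVert T(h_n) \rVert_Y$ growing fast enough to overwhelm this suppression. It is precisely at this point that having the Riesz--Fischer property with the explicit constant $C$ (Theorem~\ref{TC}), rather than mere completeness, is what makes the construction transparent.
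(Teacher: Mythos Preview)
Your argument is correct and follows essentially the same route as the paper: reduce to nonnegative inputs via condition~\ref{T1}, invoke the Riesz--Fischer property (Theorem~\ref{TC}) to assemble a single $f\in X$ dominating each weighted $h_n$, and then use condition~\ref{T2} together with positive homogeneity of $T$ to force $\lVert T(f)\rVert_Y=\infty$. The only differences are cosmetic---the paper takes weights $(2C)^{-n-1}$ and targets $\lVert T(f_n)\rVert_Y\ge n(2C)^{n+1}$, whereas you use $\lambda_n=C^{-(n+1)}(n+1)^{-2}$ and $b_n=(n+1)^3C^{n+1}$---and, like the paper, you should note that the identification of $f$ with the pointwise sum comes from the proof of Theorem~\ref{TC} rather than from monotonicity alone.
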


\begin{proof}
	We will only prove the sufficiency since the necessity is obvious.
	
	Denote by $C$ the modulus of concavity of both $\lVert \cdot \rVert_X$ and $\lVert \cdot \rVert_Y$ and suppose that no finite constant $C_T$ satisfies $\lVert T(f) \rVert_Y \leq C_T \lVert f \rVert_X$ for all $f \in X$. Then there is a sequence $g_n$ of functions in $X$ such that
	\begin{align*}
		\lVert g_n \rVert_X &\leq 1, \\
		\lVert T(g_n) \rVert_Y &\geq n(2C)^{n+1}.
	\end{align*}
	By putting $f_n = \lvert g_n \rvert$ we obtain a sequence of non-negative functions $f_n$ that satisfy
	\begin{align*}
		\lVert f_n \rVert_X &\leq 1, \\
		\lVert T(f_n) \rVert_Y &\geq n(2C)^{n+1}\frac{1}{c_1},
	\end{align*}
	where the second estimate holds because $T$ satisfies the condition \eqref{T1}. It follows that
	\begin{equation*}
		\sum_{n=0}^{\infty} C^{n+1}\lVert (2C)^{-n-1}f_n \rVert_X \leq \sum_{n=0}^{\infty} 2^{-n-1} < \infty,
	\end{equation*}
	and thus Theorem~\ref{TC} implies that $f = \sum_{n=0}^{\infty} (2C)^{-n-1}f_n \in X$. Note that this sum converges to $f$ both in $X$ and $\mu$-a.e., which together with the non-negativeness of the functions $f_n$ yields that it holds for every $k \in \mathbb{N}$ that
	\begin{align*}
		f &= \sum_{n=0}^{\infty} (2C)^{-n-1}f_n \geq \sum_{n=0}^{k} (2C)^{-n-1}f_n \geq (2C)^{-k-1}f_k & \text{$\mu$-a.e.}
	\end{align*}
	Now, $T$ satisfies the condition \eqref{T2} and it therefore holds that
	\begin{equation*}
		\lVert T(f) \rVert_Y \geq \frac{(2C)^{-k-1}}{c_2} \lVert T(f_k) \rVert_Y \geq \frac{k}{c_2 c_1}
	\end{equation*}
	for all $k \in \mathbb{N}$. Hence, $T(f) \notin Y$, which establishes the sufficiency.
\end{proof}

An important special case of Theorem~\ref{TOQBFS} is an extremely useful result that tells us that an embedding between two quasi-Banach function spaces is always continuous.

\begin{corollary} \label{CEQBFS}
	Let $(X,\lVert \cdot \rVert_X)$ and $(Y,\lVert \cdot \rVert_Y)$ be quasi-Banach function spaces. If $X \subseteq Y$ then also $X \hookrightarrow Y$.
\end{corollary}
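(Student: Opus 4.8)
The plan is to obtain this as a direct application of Theorem~\ref{TOQBFS} to the identity operator. Since the inclusion $X \subseteq Y$ only makes sense when both spaces are built over the same underlying measure space, I may take $S = R$ and $\nu = \mu$ in the setting of Theorem~\ref{TOQBFS} and consider the identity mapping $I \colon M(R, \mu) \to M(R, \mu)$ given by $I(f) = f$.

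First I would verify that $I$ meets the hypotheses of Theorem~\ref{TOQBFS}. The identity is linear, hence in particular quasilinear. Condition~\eqref{T1} holds with $c_1 = 1$ because $\lvert I(f) \rvert = \lvert f \rvert = I(\lvert f \rvert)$, and condition~\eqref{T2} holds with $c_2 = 1$, since $\lvert f \rvert \leq \lvert g \rvert$ $\mu$-a.e.\ immediately gives $\lvert I(f) \rvert \leq \lvert I(g) \rvert$ $\mu$-a.e. Moreover, the hypothesis $X \subseteq Y$ says precisely that $I(f) = f \in Y$ for every $f \in X$, which is exactly the right-hand condition in the equivalence furnished by Theorem~\ref{TOQBFS}.

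Applying that theorem therefore yields a finite constant $C_T > 0$ with $\lVert f \rVert_Y = \lVert I(f) \rVert_Y \leq C_T \lVert f \rVert_X$ for all $f \in X$. Since the topologies on $X$ and $Y$ are the ones induced by their respective quasinorms, and convergence in those topologies is equivalent to convergence with respect to the quasinorms, this norm estimate is equivalent to the continuity of the identity embedding; hence $X \hookrightarrow Y$, as claimed. I expect no genuine obstacle here: the entire content of the statement is carried by Theorem~\ref{TOQBFS}, and the only thing one needs to check is the essentially trivial fact that the identity operator satisfies its two structural hypotheses with constants equal to one.
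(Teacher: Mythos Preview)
Your proposal is correct and matches the paper's approach exactly: the paper presents this corollary immediately after Theorem~\ref{TOQBFS}, describing it as ``an important special case'' obtained by taking $T$ to be the identity operator, with no further proof given. Your verification that the identity satisfies the structural hypotheses with $c_1 = c_2 = 1$ is precisely what is implicitly intended.
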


As another consequence of Theorem~\ref{TC} we present here the following result concerning associate spaces. Statements of this type are sometimes called Landau's resonance theorems and they can be quite useful as a tool in the study of associate spaces. For the more classical version concerning Banach function spaces see for example \cite[Chapter~1, Lemma~2.6]{BennettSharpley88}.

\begin{theorem} \label{LT}
	Let $(X,\lVert \cdot \rVert_X)$ be a quasi-Banach function space and let $\lVert \cdot \rVert_{X'}$ and $X'$, respectively, be the corresponding associate norm and associate space. Then arbitrary function $f \in M$ belongs to $X'$ if and only if it satisfies
	\begin{equation} \label{LT1}
		\int_R \lvert f g \rvert \: d\mu < \infty
	\end{equation}
	for all $g \in X$.
\end{theorem}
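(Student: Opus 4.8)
The plan is to prove the two implications separately, with the forward (``only if'') direction following immediately from the Hölder inequality and the reverse (``if'') direction relying on the Riesz--Fischer property established in Theorem~\ref{TC}.

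For the necessity, suppose $f \in X'$, so that $\lVert f \rVert_{X'} < \infty$. For any $g \in X$ we have $\lVert g \rVert_X < \infty$ by definition of $X$, and hence the Hölder inequality for associate spaces (Theorem~\ref{THAS}) yields $\int_R \lvert f g \rvert \, d\mu \leq \lVert g \rVert_X \lVert f \rVert_{X'} < \infty$. Thus \eqref{LT1} holds for every $g \in X$, and no pathological case of the product conventions is triggered since both factors are finite.

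For the sufficiency I would argue by contradiction. Assume that \eqref{LT1} holds for all $g \in X$ but that $f \notin X'$, i.e.~$\lVert f \rVert_{X'} = \infty$. Since $\lVert f \rVert_{X'} = \lVert\,\lvert f\rvert\,\rVert_{X'}$ and the integrand in \eqref{LT1} involves only $\lvert f g \rvert$, I may assume without loss of generality that $f \geq 0$ and restrict attention to nonnegative competitors $g$. Writing $C$ for the modulus of concavity of $\lVert \cdot \rVert_X$ and recalling that $\lVert f \rVert_{X'}$ is by Definition~\ref{DAS} a supremum over $g \in X$, the assumption $\lVert f \rVert_{X'} = \infty$ lets me select, for each $n \in \mathbb{N}$, a function $g_n \in X$ with $g_n \geq 0$, $\lVert g_n \rVert_X \leq 1$, and
\begin{equation*}
	\int_R f g_n \, d\mu \geq n (2C)^{n+1},
\end{equation*}
where the normalisation $\lVert g_n \rVert_X \leq 1$ is achieved using the positive homogeneity of the quasinorm. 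I then set $h_n = (2C)^{-n-1} g_n$ and observe that
\begin{equation*}
	\sum_{n=0}^{\infty} C^{n+1} \lVert h_n \rVert_X \leq \sum_{n=0}^{\infty} C^{n+1} (2C)^{-n-1} = \sum_{n=0}^{\infty} 2^{-n-1} < \infty,
\end{equation*}
so Theorem~\ref{TC} guarantees that $g := \sum_{n=0}^{\infty} h_n$ belongs to $X$. As noted in the proof of Theorem~\ref{TC} (and already exploited in the proof of Theorem~\ref{TOQBFS}), this $g$ is represented $\mu$-a.e.~by the pointwise sum $\sum_{n=0}^{\infty} (2C)^{-n-1} g_n$; since all the $g_n$ are nonnegative, this gives $g \geq (2C)^{-n-1} g_n$ $\mu$-a.e.~for every $n$, whence
\begin{equation*}
	\int_R f g \, d\mu \geq (2C)^{-n-1} \int_R f g_n \, d\mu \geq n
\end{equation*}
for all $n \in \mathbb{N}$. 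Therefore $\int_R f g \, d\mu = \infty$ even though $g \in X$, contradicting \eqref{LT1}. This contradiction forces $\lVert f \rVert_{X'} < \infty$, i.e.~$f \in X'$.

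I expect the main obstacle to be justifying the pointwise domination $g \geq (2C)^{-n-1} g_n$, which does not follow from the abstract statement of the Riesz--Fischer property alone (that provides only convergence in the quasinorm) but requires the sharper information, contained in the \emph{proof} of Theorem~\ref{TC}, that the $X$-limit of the partial sums coincides $\mu$-a.e.~with the pointwise sum of the series. The reduction to nonnegative $f$ and $g_n$, together with the correct calibration of the growth rate $n(2C)^{n+1}$ so that the factors $C^{n+1}$ appearing in Theorem~\ref{TC} are absorbed, are the remaining points requiring care.
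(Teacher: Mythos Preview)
Your proof is correct and follows essentially the same approach as the paper: necessity via the H\"older inequality (Theorem~\ref{THAS}), and sufficiency via the contrapositive by constructing a ``bad'' $g\in X$ from a sequence $g_n$ with $\int |fg_n|\,d\mu > n(2C)^{n+1}$ using Theorem~\ref{TC} and the pointwise domination $g \geq (2C)^{-n-1}g_n$. Your explicit acknowledgement that the pointwise representation of $g$ must be extracted from the \emph{proof} of Theorem~\ref{TC} rather than its statement is well observed and matches exactly what the paper relies on.
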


\begin{proof}
	The necessity is an immediate consequence of the Hölder inequality (Theorem~\ref{THAS}).
	
	As for the sufficiency, denote by $C$ the modulus of concavity of $\lVert \cdot \rVert_X$ and suppose that $f \notin X'$. By the definition of $X'$, this means that there exists some sequence $g_n$ of non-negative functions in $X$ such that $\lVert g_n \rVert_X \leq 1$ while
	\begin{equation*}
		\int_R \lvert f g_n \rvert \: d\mu > n (2C)^{n+1}.
	\end{equation*}
	Then, as in the preceding theorem, we obtain that
	\begin{equation*}
		\sum_{n=0}^{\infty} C^{n+1} \lVert (2C)^{-n-1} g_n \rVert_X < \infty
	\end{equation*}
	which yields us, by the means of Theorem~\ref{TC}, a function $g = \sum_{n=0}^{\infty} (2C)^{-n-1} g_n \in X$ which satisfies $g \geq (2C)^{-n-1} g_n$ $\mu$-a.e.~for all $n \in \mathbb{N}$ and thus
	\begin{equation*}
		\int_R \lvert fg \rvert \: d\mu \geq (2C)^{-n-1} \int_R \lvert f g_n \rvert \: d\mu \geq n
	\end{equation*}
	for all $n \in \mathbb{N}$. That is, we have shown that there is a $g \in X$ which violates \eqref{LT1}.
\end{proof}

The final application we present here is a result about those quasi-Banach function norms that do not satisfy the axiom \ref{P5} of Banach function spaces.

\begin{theorem} \label{TP5}
	Let $(X,\lVert \cdot \rVert_X)$ be a quasi-Banach function space. Suppose that $E \subseteq R$ is a set such that $\mu(E) < \infty$ and that for every constant $K \in (0, \infty)$ there is a non-negative function $f \in X$ satisfying
	\begin{equation*}
		\int_E \lvert f \rvert \: d\mu > K \lVert f \rVert_X.
	\end{equation*}
	Then there is a non-negative function $f_E \in X$ such that
	\begin{equation} \label{TP5.1}
		\int_E f_E \: d\mu = \infty.
	\end{equation}
\end{theorem}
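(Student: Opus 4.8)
The plan is to mirror the constructions from the proofs of Theorem~\ref{TOQBFS} and Theorem~\ref{LT}: exploit the hypothesised failure of the resonance estimate to manufacture a sequence of normalised non-negative functions whose integrals grow very rapidly, and then assemble them into a single function of $X$ by invoking the Riesz--Fischer property (Theorem~\ref{TC}) against a geometrically decaying weight that is tuned to dominate the growth of the integrals.

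First I would fix the modulus of concavity $C$ of $\lVert \cdot \rVert_X$ and, for each $n \in \mathbb{N}$, apply the assumption with the constant $K = (n+1)(2C)^{n+1} > 0$ to obtain a non-negative $g_n \in X$ satisfying
\begin{equation*}
	\int_R g_n \: d\mu > (n+1)(2C)^{n+1} \lVert g_n \rVert_X.
\end{equation*}
Because the right-hand side is strictly positive, axiom~\ref{Q1b} rules out $\lVert g_n \rVert_X = 0$, while $g_n \in X$ rules out $\lVert g_n \rVert_X = \infty$; hence I may normalise by setting $f_n = g_n / \lVert g_n \rVert_X$, obtaining non-negative functions with $\lVert f_n \rVert_X = 1$ and $\int_R f_n \: d\mu > (n+1)(2C)^{n+1}$.

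Next I would apply the Riesz--Fischer property to the weighted sequence $(2C)^{-n-1} f_n$. Its summability hypothesis holds because
\begin{equation*}
	\sum_{n=0}^{\infty} C^{n+1} \lVert (2C)^{-n-1} f_n \rVert_X = \sum_{n=0}^{\infty} 2^{-n-1} < \infty,
\end{equation*}
so Theorem~\ref{TC} produces a function $f_E = \sum_{n=0}^{\infty} (2C)^{-n-1} f_n \in X$. Exactly as noted in the proof of Theorem~\ref{TOQBFS}, the proof of Theorem~\ref{TC} shows that this limit is represented $\mu$-a.e.\ by the pointwise series, and since each $f_n$ is non-negative this gives $f_E \geq (2C)^{-n-1} f_n \geq 0$ $\mu$-a.e.\ for every $n$. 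Monotonicity of the integral then yields
\begin{equation*}
	\int_R f_E \: d\mu \geq (2C)^{-n-1} \int_R f_n \: d\mu > n+1
\end{equation*}
for every $n \in \mathbb{N}$, whence $\int_R f_E \: d\mu = \infty$, which is precisely \eqref{TP5.1}.

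I do not anticipate a genuine obstacle here, as the whole argument is a direct application of the Riesz--Fischer property already secured in Theorem~\ref{TC}. The only delicate points are the normalisation step, where one must confirm that $\lVert g_n \rVert_X$ is neither zero nor infinite so that $f_n$ is well defined, and the calibration of the weight $(2C)^{-n-1}$ against the prescribed integral growth $(n+1)(2C)^{n+1}$: this balance is chosen so that the weighted series converges in $X$ while each summand still forces an arbitrarily large contribution to the integral, which is exactly the tension that must be resolved to obtain a single function of $X$ with divergent integral.
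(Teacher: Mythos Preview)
Your proposal is correct and follows essentially the same route as the paper: choose non-negative functions with $\lVert f_n \rVert_X \leq 1$ and $\int_R f_n \, d\mu > n(2C)^{n+1}$, apply Theorem~\ref{TC} to the weighted sequence $(2C)^{-n-1} f_n$, and use the pointwise domination $f_E \geq (2C)^{-n-1} f_n$ to force the integral of $f_E$ to diverge. Your version is in fact slightly more careful than the paper's in making the normalisation step explicit.
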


\begin{proof}
	Denote by $C$ the modulus of concavity of $\lVert \cdot \rVert_X$ and consider some sequence $f_n$ of non-negative functions in $X$ such that
	\begin{align*}
		\lVert f_n \rVert_X \leq 1, \\
		\int_E f_n \: d\mu > n (2C)^{n+1}.
	\end{align*}
	As before, we get that
	\begin{equation*}
		\sum_{n=0}^{\infty} C^{n+1} \lVert (2C)^{-n-1} f_n \rVert_X < \infty
	\end{equation*}
	and we thus obtain a function $f_E = \sum_{n=0}^{\infty} (2C)^{-n-1} f_n \in X$ satisfying $f_E \geq (2C)^{-n-1} f_n$ $\mu$-a.e.~for all $n \in \mathbb{N}$. Consequently,
	\begin{equation*}
		\int_E f_E \: d\mu > n
	\end{equation*}
	for all $n \in \mathbb{N}$ which shows that $f_E$ satisfies \eqref{TP5.1}.
\end{proof}

\subsection{Separability} \label{SS}

In this section we examine the separability of quasi-Banach function spaces. We restrict ourselves to the case when $(R, \mu) = (\mathbb{R}^n, \lambda^n)$ and arrive to the expected conclusion that a quasi-Banach function space is separable if and only if it has absolutely continuous quasinorm. Note that our approach differs from that which is usually used in the context of Banach function spaces, see e.g.~\cite[Chapter~1, Section~5]{BennettSharpley88}. This is due to the fact that this classical approach depends on the fact that the Banach function spaces have separating dual and is therefore unusable in the context quasi-Banach function spaces which, in general, do not have this property.

We first introduce some auxiliary terms as well as some notation.

\begin{definition}
	Let $k\in \mathbb{Z}$ and let $a=(a_1,a_2,\dots,a_n)\in \mathbb{Z}^n$. Denote by $Q_{k,a}$ the dyadic cube
	\begin{equation*}
	Q_{k,a}=\prod_{i=1}^n\Big(\frac{a_i}{2^k},\frac{a_i+1}{2^k}\Big).
	\end{equation*}
	Moreover, denote by $\mathcal{D}_k$ the collection
	\begin{equation*}
	\mathcal{D}_k=\{Q_{k,a}; \; a\in \mathbb{Z}^n\}
	\end{equation*}
	of all dyadic cubes of order $k$ and by $\mathcal{D}$ the collection
	\begin{equation*}
	\mathcal{D}=\bigcup_{k\in \mathbb{Z}} \mathcal{D}_k
	\end{equation*}
	of all dyadic cubes.
	
	We say that a set $\Omega\subset \mathbb{R}^n$ is a complex of order $k$ if there are finitely many sets $Q_i \in \mathcal{D}_k$ that satisfy
	\begin{align*}
	&\Omega=\bigcup_i Q_i.
	\end{align*}
\end{definition}

Note that $\mathcal{D}$ is a countable collection of sets.

\begin{definition}
	We denote by $\mathcal{S}$ the following family of simple functions:
	\begin{equation*}
	\mathcal{S}=\left \{ f \in M(\mathbb{R}^n, \lambda^n); \; f=\sum_{i=1}^k \alpha_i \chi_{Q_i}, Q_i\in \mathcal{D}, \alpha_i\in \mathbb{C}, \textup{Re}(\alpha_i) \in \mathbb{Q}, \textup{Im}(\alpha_i) \in \mathbb{Q} \right \}.
	\end{equation*}
\end{definition}

Note that $\mathcal{S}$ is a countable family of functions.

The last thing we need in order to prove our results is the following covering lemma.

\begin{lemma} \label{LC}
	Let $K\subset \mathbb{R}^n$ be a compact set. Then for any open set $G$ such that $K \subseteq G$, any $\varepsilon >0$ and any $k_0 \in \mathbb{Z}$ there is a complex $\Omega$ of order $k$, where $k \geq k_0$, that has the following properties:
	\begin{enumerate}
		\item \label{LC1}
		$\Omega \subseteq G$,
		\item \label{LC2}
		$\lambda^n (K \setminus \Omega) = 0$,
		\item \label{LC3}
		$\lambda^n(\Omega \setminus K) < \varepsilon$,
		\item  \label{LC4}
		if $Q \in \mathcal{D}_{k}$ satisfies $Q \subseteq \Omega$ then $Q \cap K \neq \emptyset$.
	\end{enumerate}
\end{lemma}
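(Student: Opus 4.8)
The plan is to construct $\Omega$ explicitly as the union of all dyadic cubes of a sufficiently fine order that meet $K$, and then to choose the order large enough to secure the non-combinatorial conditions. Concretely, for each $k \in \mathbb{Z}$ I set
\begin{equation*}
	\Omega_k = \bigcup \{ Q \in \mathcal{D}_k ; Q \cap K \neq \emptyset \}.
\end{equation*}
Since $K$ is compact it is bounded, so only finitely many cubes of any fixed order $k$ meet it; hence each $\Omega_k$ is a complex of order $k$, and the desired $\Omega$ will be $\Omega_k$ for one suitably large $k \geq k_0$.

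The combinatorial and geometric conditions are then immediate. Property~\eqref{LC4} holds by construction: the open cubes of $\mathcal{D}_k$ are pairwise disjoint, so if $Q \in \mathcal{D}_k$ satisfies $Q \subseteq \Omega_k$ then, picking a point of $Q$, that point lies in one of the cubes forming $\Omega_k$, which must then coincide with $Q$, whence $Q \cap K \neq \emptyset$. For property~\eqref{LC2} I note that any $x \in K$ either lies in some open cube of $\mathcal{D}_k$ — which is then one of the cubes of $\Omega_k$, so $x \in \Omega_k$ — or lies on the dyadic grid $\bigcup_{Q \in \mathcal{D}_k} \partial Q$; thus $K \setminus \Omega_k$ is contained in this grid, a set of Lebesgue measure zero. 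For property~\eqref{LC1}, assuming $G \neq \mathbb{R}^n$ (the excluded case being trivial), I put $d = \operatorname{dist}(K, \mathbb{R}^n \setminus G)$, which is positive because $K$ is compact and $\mathbb{R}^n \setminus G$ is closed and disjoint from it. If $k$ is large enough that the diameter $\sqrt{n}\,2^{-k}$ of an order-$k$ cube is smaller than $d$, then any cube meeting $K$ has all its points within distance $d$ of $K$ and is therefore contained in $G$; hence $\Omega_k \subseteq G$.

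The main obstacle is property~\eqref{LC3}, which I would handle by a continuity-of-measure argument. Introduce the closed auxiliary sets $\Omega_k' = \bigcup \{ \overline{Q} ; Q \in \mathcal{D}_k, \overline{Q} \cap K \neq \emptyset \}$, each compact and containing $\Omega_k$. The key point is that this sequence is decreasing in $k$: a closed order-$(k+1)$ cube meeting $K$ is contained in its closed order-$k$ parent, which then also meets $K$, whence $\Omega_{k+1}' \subseteq \Omega_k'$. Moreover $\bigcap_k \Omega_k' = K$, since every point of $K$ lies in each $\Omega_k'$, while any $x \notin K$ is at positive distance from the closed set $K$ and hence, once the cube diameter falls below that distance, lies in no closed cube that meets $K$. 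As $\Omega_{k_0}'$ has finite measure, continuity of $\lambda^n$ from above yields $\lambda^n(\Omega_k') \downarrow \lambda^n(K)$. Combining this with $\lambda^n(K \setminus \Omega_k) = 0$ (from property~\eqref{LC2}) and the inclusion $\Omega_k \subseteq \Omega_k'$, the squeeze
\begin{equation*}
	\lambda^n(K) = \lambda^n(K \cap \Omega_k) \leq \lambda^n(\Omega_k) \leq \lambda^n(\Omega_k')
\end{equation*}
forces $\lambda^n(\Omega_k) \to \lambda^n(K)$, and therefore $\lambda^n(\Omega_k \setminus K) = \lambda^n(\Omega_k) - \lambda^n(K) \to 0$. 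Choosing $k \geq k_0$ large enough to meet simultaneously the diameter bound of the previous paragraph and the inequality $\lambda^n(\Omega_k \setminus K) < \varepsilon$ completes the construction.
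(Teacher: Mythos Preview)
Your proof is correct and uses the same basic construction as the paper: both take $\Omega$ to be the union of all order-$k$ dyadic cubes meeting $K$, and both handle \eqref{LC1}, \eqref{LC2}, \eqref{LC4} by the obvious diameter and grid arguments. The only real difference is in securing \eqref{LC3}. The paper first invokes outer regularity of $\lambda^n$ to pick an open $\tilde{H}\supseteq K$ with $\lambda^n(\tilde{H}\setminus K)<\varepsilon$, sets $H=G\cap\tilde{H}$, and then a single distance argument $\sqrt{n}\,2^{-k}<\operatorname{dist}(K,\mathbb{R}^n\setminus H)$ forces $\Omega\subseteq H$, giving \eqref{LC1} and \eqref{LC3} simultaneously. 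Your approach instead introduces the closed-cube covers $\Omega_k'$, observes that they decrease to $K$, and applies continuity of measure from above to get $\lambda^n(\Omega_k\setminus K)\to 0$. The paper's route is a bit shorter and kills \eqref{LC1} and \eqref{LC3} with one stroke; yours avoids the appeal to outer regularity and is arguably more self-contained, at the price of the auxiliary $\Omega_k'$ and the monotonicity check. Either way the argument is sound.
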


\begin{proof}
	Find some open set $\tilde{H}$ such that $K \subseteq \tilde{H}$ and $\lambda^n (\tilde{H} \setminus K) < \varepsilon$, set $H = G \cap \tilde{H}$ and put $\delta = \textup{dist}(K, \mathbb{R}^n \setminus H) > 0$. Then there is a $k \geq k_0$ such that $2^{-k} \sqrt{n} < \delta$. Put
	\begin{equation*}
	\Omega = \bigcup \left \{ Q \in \mathcal{D}_k; \; Q \cap K \neq \emptyset \right \}.
	\end{equation*}
	Then $\Omega$ is a complex of order $k$ (because $K$ is bounded) and clearly has the properties \eqref{LC2} and \eqref{LC4}.  Furthermore, it holds for any $x \in \Omega$ that it belongs to some $Q_x \in \mathcal{D}_k$ such that $Q_x \cap K \neq \emptyset$ and thus
	\begin{equation*}
	\textup{dist}(x, K) \leq \textup{diam}(Q_x) = 2^{-k} \sqrt{n} < \delta.
	\end{equation*}
	Hence, $\Omega \subseteq H$ and the remaining properties \eqref{LC1} and \eqref{LC3} follow.
\end{proof}

We are now suitably equipped to prove our results. We begin by the sufficiency in its following precise form.

\begin{theorem} \label{TAAC}
	Let $(X, \lVert \cdot \rVert_X)$ be a quasi-Banach function space over $M(\mathbb{R}^n, \lambda^n)$. Suppose that $\chi_K$ has absolutely continuous quasinorm for any compact $K \subseteq \mathbb{R}^n$. Then for any function $f \in X$ that has absolutely continuous quasinorm and any $\varepsilon > 0$ there is a function $s \in \mathcal{S}$ such that $\lVert f - s \rVert_X < \varepsilon$.
\end{theorem}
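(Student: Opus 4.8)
The plan is to approximate $f$ by peeling off three successive reductions—first to a non-negative, bounded, boundedly supported function, then to a finite simple function built from arbitrary measurable sets, and finally to a member of $\mathcal{S}$—controlling the $X$-quasinorm at each stage. The one genuinely non-trivial mechanism, needed repeatedly, is the conversion of \emph{small Lebesgue measure} into \emph{small quasinorm}; this is supplied precisely by the hypothesis on compact characteristic functions, used through the $\delta$--$\varepsilon$ reformulation of absolute continuity (the second of the two propositions following the definition of absolute continuity). As a preliminary step I would decompose $f = u^+ - u^- + i(v^+ - v^-)$ into its four non-negative real parts; since each is pointwise dominated by $\lvert f \rvert$, the lattice property \ref{P2} shows each inherits absolute continuity of the quasinorm from $f$, so it suffices to approximate a single non-negative $g \in X$ with absolutely continuous quasinorm, the complex rational coefficients of $\mathcal{S}$ being recovered automatically when the four approximations are recombined.

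Next I would reduce to bounded support. Fixing an exhaustion of $\mathbb{R}^n$ by balls $B_m$ and setting $E_m = \{g > m\} \cup (\mathbb{R}^n \setminus B_m)$, Theorem~\ref{TEiMF} gives $g < \infty$ $\mu$-a.e., so $\chi_{E_m} \downarrow 0$ $\mu$-a.e.\ and hence $\lVert g \chi_{E_m} \rVert_X \to 0$ by absolute continuity of $g$. Thus, up to an $X$-error as small as desired, $g$ may be replaced by the bounded function $g \chi_{\{g \le m\} \cap B_m}$, which is supported in a fixed ball that I henceforth call $\tilde{K}$. For such a bounded $g$ the dyadic quantisation $g_j = 2^{-j} \lfloor 2^j g \rfloor$ takes finitely many values, is supported in $\tilde{K}$, and satisfies $\lvert g - g_j \rvert \le 2^{-j} \chi_{\tilde{K}}$, so by \ref{P2} and \ref{P4} we get $\lVert g - g_j \rVert_X \le 2^{-j} \lVert \chi_{\tilde{K}} \rVert_X \to 0$. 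Since $g_j = \sum_l c_l \chi_{A_l}$ is a \emph{finite} sum with each $A_l \subseteq \tilde{K}$ measurable of finite measure, Lemma~\ref{NT} reduces the problem to approximating a single $\chi_{A}$ with $A \subseteq \tilde{K}$.

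For this last reduction, given $A \subseteq \tilde{K}$ and $\eta > 0$, inner and outer regularity of $\lambda^n$ furnish a compact $K$ and a bounded open $G$ with $K \subseteq A \subseteq G$ and $\lambda^n(G \setminus K) < \eta$; intersecting $G$ with a slightly larger open ball if necessary, I may assume $G$ is contained in a fixed compact set $\tilde{K}'$. Lemma~\ref{LC} then produces a complex $\Omega$ with $\Omega \subseteq G$, $\lambda^n(K \setminus \Omega) = 0$ and $\lambda^n(\Omega \setminus K) < \eta$, whence $\lvert \chi_A - \chi_\Omega \rvert = \chi_{A \triangle \Omega}$ with $\lambda^n(A \triangle \Omega) < 2\eta$ and $A \triangle \Omega \subseteq \tilde{K}'$. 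Applying the $\delta$--$\varepsilon$ form of absolute continuity to the single compact characteristic function $\chi_{\tilde{K}'}$ makes $\lVert \chi_A - \chi_\Omega \rVert_X = \lVert \chi_{\tilde{K}'} \chi_{A \triangle \Omega} \rVert_X$ arbitrarily small once $\eta$ is small. As $\Omega$ has finite measure, $\lVert \chi_\Omega \rVert_X < \infty$ by \ref{P4}, so the coefficients $c_l$ may be adjusted to nearby numbers with rational real and imaginary parts at a cost of $\lvert c_l - c_l' \rvert \lVert \chi_\Omega \rVert_X$; reassembling the finitely many terms via Lemma~\ref{NT} yields the required $s \in \mathcal{S}$ with $\lVert f - s \rVert_X < \varepsilon$.

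The main obstacle is exactly the measure-to-quasinorm passage, which genuinely fails for general quasi-Banach function norms and is the reason the compactness hypothesis is imposed. The technical care lies in arranging that \emph{every} set entering the estimates—the level sets $A_l$, the complexes $\Omega$, and the symmetric differences $A_l \triangle \Omega$—is trapped inside one fixed compact set $\tilde{K}'$, so that a \emph{single} absolutely continuous function $\chi_{\tilde{K}'}$ provides one uniform $\delta$ governing all error estimates at once. Combined with the finiteness of the number of quantisation levels, which keeps the powers of the modulus of concavity $C$ in Lemma~\ref{NT} uniformly bounded, this prevents the accumulated constants from degrading the approximation.
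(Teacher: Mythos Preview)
Your proposal is correct and follows a genuinely different route from the paper's own proof. The paper proceeds in one pass: after the same truncation in value and support, it invokes Luzin's theorem to obtain a compact set $K$ on which $f$ is uniformly continuous, covers $K$ by a dyadic complex $\Omega$ via Lemma~\ref{LC}, and defines $s$ by sampling $f$ at one point per cube; the estimate on $K$ comes from uniform continuity, while the estimate off $K$ uses absolute continuity of $f$ and of $\chi_{\tilde K}$ exactly as you do. Your argument replaces the Luzin--sampling step by dyadic quantisation $g_j = 2^{-j}\lfloor 2^j g \rfloor$ followed by approximation of each level set via regularity of $\lambda^n$ plus Lemma~\ref{LC}, which is more elementary (no Luzin) and cleanly separates the three reductions; the cost is that one must track the number of quantisation levels so that the powers of $C$ from Lemma~\ref{NT} stay fixed before choosing $\eta$. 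Two minor remarks: the coefficients $c_l = k\,2^{-j}$ produced by the quantisation are already rational, so the final coefficient-adjustment step is redundant (though harmless); and it is worth stating explicitly that the order of choices is $\varepsilon \to m \to j \to \eta$, so that the number of level sets is frozen before the uniform $\delta$ is selected from the absolute continuity of $\chi_{\tilde K'}$.
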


\begin{proof}
	Denote by $C$ the modulus of concavity of $\lVert \cdot \rVert_X$ and fix some $f \in X$ and $\varepsilon > 0$. Because $f$ is finite $\lambda^n$-a.e. by Theorem~\ref{TEiMF} and is assumed to have absolutely continuous quasinorm, there is some $N > 0$ such that the restrictions of $f$ onto the sets
	\begin{align*}
	E_0 &= \left \{ x \in \mathbb{R}^n; \; \lvert f(x) \rvert > N \right \},	\\
	E_1 &= \left \{ x \in \mathbb{R}^n; \; \lvert x \rvert > N \right \}
	\end{align*}
	satisfy
	\begin{align}
	\lVert f \chi_{E_0} \rVert_X &< \varepsilon, \label{TAAC1}	\\	
	\lVert f \chi_{E_1} \rVert_X &< \varepsilon. \label{TAAC2}
	\end{align}
	Consider now the set $E = \mathbb{R}^n \setminus (E_0 \cup E_1)$ and denote $L = \lVert \chi_E \rVert_X \in (0, \infty)$. We may assume $L>0$, because otherwise $\chi_E = 0$ $\lambda^n$-a.e.~in which case $f$ can be approximated by the zero function. Since $E$ is bounded we can find some compact set $\tilde{K}$ such that there is an open set $G$ satisfying $E \subseteq G \subseteq \tilde{K}$. Thanks to the assumption that both $f$ and $\chi_{\tilde{K}}$ have absolutely continuous quasinorm we may find $\delta > 0$ such that it holds for all $A \subseteq G$ satisfying $\lambda^n(A) < \delta$ that both
	\begin{align}
	\lVert f \chi_A \rVert_X &< \varepsilon, \label{TAAC3} \\
	\lVert \chi_A \rVert_X &< \frac{\varepsilon}{N + \frac{\varepsilon}{L}}. \label{TAAC4}
	\end{align}
	By the classical Luzin theorem (see e.g.~\cite[Theorem~2.24]{Rudin87}) there is a compact $K \subseteq E$ such that $\lambda^n(E \setminus K) < \delta$ and that the restriction of $f$ onto $K$ is uniformly continuous on $K$. We may thus find some $\Delta > 0$ such that it holds for all $x, y \in K$ satisfying $\lvert x - y \rvert < \Delta$ that
	\begin{equation}
	\lvert f(x) - f(y) \rvert < \frac{\varepsilon}{L}. \label{TAAC5}
	\end{equation}
	
	Find now some $k_0 \in \mathbb{N}$ such that $2^{-k_0}\sqrt{n} < \Delta$. By Lemma~\ref{LC} there is a complex $\Omega$ of order $k$, where $k \geq k_0$, such that $\Omega \subseteq G$, $\lambda^n(K \setminus \Omega) = 0$ and $\lambda^n(\Omega \setminus K) < \delta$. Denote by $Q_i$ the finite sequence of dyadic cubes from $\mathcal{D}_k$ for which $\Omega = \bigcup_i Q_i$. Choose for every $i$ some arbitrary point $x_i \in Q_i \cap K$ (existence of this point follows from Lemma~\ref{LC}) and find some $a_i \in \{z \in \mathbb{C}; \; \textup{Re}(z) \in \mathbb{Q}, \textup{Im}(z) \in \mathbb{Q} \}$ such that
	\begin{equation}
	\lvert a_i - f(x_i)\rvert < \frac{\varepsilon}{L}. \label{TAAC6}
	\end{equation}
	We are now in position to define $s$ by
	\begin{align*}
	s(x) &= 
	\begin{cases}
	0, &x \notin \Omega, \\
	a_i, &x \in Q_i.
	\end{cases}
	\end{align*}
	
	It remains to estimate $\lVert f - s \rVert_X$. We expand it to get that
	\begin{equation*}
	\lVert f -s \rVert_X \leq C  \lVert (f -s)\chi_K \rVert_X + C \lVert (f -s)\chi_{\mathbb{R}^n \setminus K} \rVert_X
	\end{equation*}
	and estimate the two terms on the right-hand side separately.
	
	Consider first the term $\lVert (f -s)\chi_K \rVert_X$. Because $\chi_K = \chi_{K \cap \Omega}$ $\lambda^n$-a.e.~it suffices to estimate $\lVert (f -s)\chi_{K \cap \Omega} \rVert_X$. To this end, consider arbitrary point $x \in K \cap \Omega$ and find the appropriate index $i$ such that $x \in Q_i$. Then
	\begin{equation*}
	\lvert x - x_i \rvert \leq \textup{diam}(Q_i) = 2^{-k}\sqrt{n} < \Delta
	\end{equation*}
	and thus, by \eqref{TAAC5} and \eqref{TAAC6},
	\begin{equation*}
	\lvert f(x) - s(x) \rvert \leq \lvert f(x) - f(x_i) \rvert + \lvert f(x_i) - s(x) \rvert < \frac{2 \varepsilon}{L}.
	\end{equation*}
	From this uniform estimate it now follows that
	\begin{equation*}
	\lVert (f -s)\chi_K \rVert_X = \lVert (f -s)\chi_{K \cap \Omega} \rVert_X \leq \frac{2 \varepsilon}{L} \lVert \chi_{K \cap \Omega} \rVert_X \leq \frac{2 \varepsilon}{L} \lVert \chi_E \rVert_X = 2 \varepsilon.
	\end{equation*}
	
	As for the remaining term $\lVert (f -s)\chi_{\mathbb{R}^n \setminus K} \rVert_X$, because $\chi_{E_0} + \chi_{E_1} + \chi_{E \setminus K} \geq \chi_{\mathbb{R}^n \setminus K}$  $\lambda^n$-a.e.~we may further expand it by Lemma~\ref{NT} to get
	\begin{equation*}
	\begin{split}
	\lVert (f -s)\chi_{\mathbb{R}^n \setminus K} \rVert_X &\leq C \lVert f \chi_{\mathbb{R}^n \setminus K} \rVert_X + C \lVert s \chi_{\mathbb{R}^n \setminus K} \rVert_X \leq \\
	&\leq C^4 \lVert f \chi_{E_0} \rVert_X + C^3 \lVert f \chi_{E_1} \rVert_X + C^2 \lVert f \chi_{E \setminus K} \rVert_X + C \lVert s \chi_{\mathbb{R}^n \setminus K} \rVert_X
	\end{split}
	\end{equation*}
	and again examine those four terms separately.
	
	The terms $\lVert f \chi_{E_0} \rVert_X$, $\lVert f \chi_{E_1} \rVert_X$ and $\lVert f \chi_{E \setminus K} \rVert_X$ are estimated immediately by \eqref{TAAC1}, \eqref{TAAC2} and \eqref{TAAC3}, respectively, one only has to remember in the last case that $E \setminus K \subseteq G$ and $\lambda^n(E \setminus K) < \delta$.
	
	Finally we turn ourselves to the term $\lVert s \chi_{\mathbb{R}^n \setminus K} \rVert_X$. The function $s$ is zero on $\mathbb{R}^n \setminus \Omega$ and also bounded on $\mathbb{R}^n$ by $N + \frac{\varepsilon}{L}$, while the set $\Omega \setminus K$ satisfies that both $\lambda^n(\Omega \setminus K) < \delta$ and $\Omega \setminus K \subseteq G$. Hence, we get by \eqref{TAAC4} that
	\begin{equation*}
	\lVert s \chi_{\mathbb{R}^n \setminus K} \rVert_X = \lVert s \chi_{\Omega \setminus K} \rVert_X \leq \left ( N + \frac{\varepsilon}{L} \right ) \lVert \chi_{\Omega \setminus K} \rVert_X < \left ( N + \frac{\varepsilon}{L} \right ) \frac{\varepsilon}{N + \frac{\varepsilon}{L}} = \varepsilon.
	\end{equation*}
	
	By combining the above obtained estimates we arrive at the conclusion that
	\begin{equation*}
	\lVert f -s \rVert_X \leq (2C + C^2 + C^3 + C^4 + C^5) \varepsilon.
	\end{equation*}
\end{proof}

\begin{theorem} \label{TS}
	Let $(X,\lVert \cdot \rVert_X)$ be a quasi-Banach function space over $M(\mathbb{R}^n, \lambda^n)$. Then $X$ is separable if and only if it has absolutely continuous quasinorm.
\end{theorem}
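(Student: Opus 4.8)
The plan is to prove the two implications separately. The reverse implication (absolute continuity $\Rightarrow$ separability) is an immediate harvest of Theorem~\ref{TAAC}, whereas the forward implication (separability $\Rightarrow$ absolute continuity) I would attack by contraposition, constructing an uncountable uniformly separated subset of $X$. I expect essentially all the difficulty to reside in one step of the latter construction.

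First I would dispatch the direction that absolute continuity implies separability. If $\lVert\cdot\rVert_X$ has absolutely continuous quasinorm then, since every compact $K\subseteq\mathbb{R}^n$ has finite measure, axiom~\ref{P4} gives $\chi_K\in X$, and hence $\chi_K$ has absolutely continuous quasinorm; likewise every $f\in X$ does. Thus the hypotheses of Theorem~\ref{TAAC} are met for every $f\in X$, so each such $f$ can be approximated in $\lVert\cdot\rVert_X$ to within any prescribed $\varepsilon>0$ by a member of the countable family $\mathcal{S}$. Therefore $\mathcal{S}$ is a countable dense subset and $X$ is separable.

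For the converse I would assume that $X$ does not have absolutely continuous quasinorm and exhibit a separated uncountable family. Fix $f\in X$ without absolutely continuous quasinorm. Using the characterisation of absolute continuity through decreasing sequences, there is a sequence $E_k$ with $\chi_{E_k}\downarrow 0$ $\lambda^n$-a.e.\ and, by monotonicity of the quasinorm along it, a constant $\eta>0$ with $\lVert f\chi_{E_k}\rVert_X\geq 2\eta$ for all $k$. The main obstacle is to convert this ``vanishing in measure but not in norm'' behaviour into a sequence of pairwise disjoint sets $A_i$ with $\lVert f\chi_{A_i}\rVert_X\geq\eta$. I would achieve this by a greedy extraction: starting from a tail set $E_{k_0}$, the functions $|f|\chi_{E_{k_0}\setminus E_m}$ increase pointwise to $|f|\chi_{E_{k_0}}$ as $m\to\infty$, so by the Fatou property (part~\ref{LFp1} of Lemma~\ref{LF}) their quasinorms increase to $\lVert f\chi_{E_{k_0}}\rVert_X\geq 2\eta$; hence some $A=E_{k_0}\setminus E_m$ satisfies $\lVert f\chi_A\rVert_X\geq\eta$ while being disjoint from the tail $E_m$, from which the procedure is restarted with $E_m$ in place of $E_{k_0}$. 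This yields the disjoint sequence $A_i$, all contained in $E_{k_0}$, so that $\sum_i |f|\chi_{A_i}\leq |f|$ and every subsum lies in $X$ by the lattice property~\ref{P2}.

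Finally I would assemble the separated family. For each $B\subseteq\mathbb{N}$ set $h_B=\sum_{i\in B} f\chi_{A_i}\in X$. Whenever $B\neq B'$, they differ at some index $i$, and since the $A_i$ are disjoint the difference satisfies $|h_B-h_{B'}|\geq |f|\chi_{A_i}$ $\lambda^n$-a.e., whence $\lVert h_B-h_{B'}\rVert_X\geq\lVert f\chi_{A_i}\rVert_X\geq\eta$ by~\ref{P2}. The map $B\mapsto h_B$ thus produces continuum-many points of $X$ that are pairwise at distance at least $\eta$; writing $C$ for the modulus of concavity of $\lVert\cdot\rVert_X$, the quasi-triangle inequality shows that the open balls of radius $\eta/(2C)$ centred at these points are pairwise disjoint, so any dense subset of $X$ must be uncountable. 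Hence $X$ is not separable, which completes the contrapositive. I expect the disjointification step and the bookkeeping of $C$ in this final ball-separation argument to be the only delicate points, everything else being a direct application of Theorem~\ref{TAAC}, the Fatou property, and the lattice property.
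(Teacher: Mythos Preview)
Your proposal is correct and follows essentially the same route as the paper's proof: both directions are handled identically (Theorem~\ref{TAAC} for sufficiency; for necessity, the Fatou-based greedy extraction of disjoint pieces $A_i=E_{k_i}\setminus E_{k_{i+1}}$ with quasinorm bounded below, followed by the uncountable family $\{h_B:B\subseteq\mathbb{N}\}$). The only notable addition is your explicit ball-separation argument with radius $\eta/(2C)$ to conclude non-separability in the quasinormed setting, which the paper leaves implicit under the phrase ``uncountable discrete subset''.
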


\begin{proof}
	The sufficiency follows directly from Theorem~\ref{TAAC}.
	
	As for the necessity, assume that there is a non-negative function $f \in X$, $\varepsilon > 0$ and a sequence $E_k$ of subsets of $\mathbb{R}^n$ such that $\chi_{E_k} \downarrow 0$ $\lambda^n$-a.e.~while
	\begin{equation*}
	\lVert f \chi_{E_k} \rVert_X > \varepsilon.
	\end{equation*}
	We will now construct, by induction, a strictly increasing sequence of natural numbers $k_i$ such that the formula
	\begin{equation} \label{TS0}
	f_i = f \left ( \chi_{E_{k_{i}}} - \chi_{E_{k_{i+1}}} \right )
	\end{equation} 
	will define a sequence of functions $f_i$ such that it will hold for every $i$ that
	\begin{enumerate}[series = Prop]
		\item \label{TS1}
		$\lVert f_i \rVert_X > \varepsilon$,
		\item \label{TS2}
		$\textup{supp}(f_i) \cap \textup{supp}(f_j) = \emptyset$ for all $j \in \mathbb{N}$ such that $j \neq i$,
		\item \label{TS3}
		$f_i \leq f$ $\lambda^n$-a.e.
	\end{enumerate}
	
	The construction proceeds as follows. Set $k_0 = 0$. Assume now that we already have $k_i$ for some $i \geq 0$ and consider the sequence $f (\chi_{E_{k_i}} - \chi_{E_k} )$, $k \geq k_i$. Since $\chi_{E_k} \downarrow 0$, we get that $f (\chi_{E_{k_i}} - \chi_{E_k} ) \uparrow f \chi_{E_{k_i}}$and thus
	\begin{equation*}
	\lVert f (\chi_{E_{k_i}} - \chi_{E_k} ) \rVert_X \uparrow \lVert f \chi_{E_{k_i}} \rVert_X > \varepsilon.
	\end{equation*}
	Consequently, there is an index $k_{i+1} > k_i$ such that the function $f_{i+1}$, defined by \eqref{TS0}, satisfies the requirement \eqref{TS1}. Since the condition \eqref{TS3} is satisfied trivially, it remains only to consider the condition \eqref{TS2}.To this end, note that $\textup{supp}(f_{i+1}) \cap E_{k_{i+1}} = \emptyset$	which gives for any $j < i+1$ that
	\begin{equation*}
	\textup{supp}(f_{i+1}) \cap \textup{supp}(f_{j}) = \emptyset.
	\end{equation*}
	This concludes the construction.
	
	Now, consider the following family of functions:
	\begin{equation*}
	\mathcal{F} = \left \{ f_{\eta} \in M(\mathbb{R}^n, \lambda^n); \; \eta \in \{0, 1\}^{\mathbb{N}}, f_{\eta} = \sum_{\substack{j \in \mathbb{N}, \\ \eta(j) = 1}} f_j \right \}.
	\end{equation*}
	Thanks to the properties \eqref{TS2} and \eqref{TS3} of the functions $f_i$, we obtain that it holds for every $\eta \in \{0, 1\}^{\mathbb{N}}$ that $f_{\eta} \leq f$ and thus $\mathcal{F} \subseteq X$. Furthermore, if $\eta_1, \eta_2 \in \{0, 1\}^{\mathbb{N}}$ satisfy for some $j_0$ that $\eta_1(j_0) \neq \eta_2(j_0)$ then we have $\lvert f_{\eta_1} - f_{\eta_2} \rvert \geq f_{j_0}$ $\lambda^n$-a.e.~and consequently
	\begin{equation*}
	\lVert f_{\eta_1} - f_{\eta_2} \rVert_X \geq \lVert f_{j_0} \rVert_X > \varepsilon.
	\end{equation*}
	It follows that $\mathcal{F}$ is an uncountable discrete subset of $X$ and $X$ therefore cannot be separable.
\end{proof}

\subsection{Boundedness of the dilation operator} \label{SBDO}

The final question we are dealing with in this paper is whether the dilation operator is bounded on r.i.~quasi-Banach function spaces over $(\mathbb{R}^n, \lambda^n)$. As stated in Theorem~\ref{TBD}, the answer is positive, but an entirely new method had to be developed, since the proof of the classical result from the setting of r.i.~Banach function spaces uses either interpolation or the property that $X$ coincides with its second associate space $X''$, neither of which is in general available in the setting of quasi-Banach function spaces.

The first step is the rather obvious observation that the dilation operator $D_a$ is, in the following sense, monotone with respect to the parameter $a$.

\begin{lemma} \label{LMDO}
	Let $(X,\lVert \cdot \rVert_X)$ be an r.i.~quasi-Banach function space over $M(\mathbb{R}^n, \lambda^n)$. Assume that $0 < a < b < \infty$. Then the dilation operators $D_a$ and $D_b$ satisfy
	\begin{equation*}
		\lVert D_b f \rVert_X \leq \lVert D_a f \rVert_X
	\end{equation*}
	for all $f \in M(\mathbb{R}^n, \lambda^n)$. Consequently,
	\begin{enumerate}
		\item
		if $a \in (0, \infty)$, $D_a$ is bounded and $b > a$ then $D_b$ is also bounded,
		\item
		if $b \in (0, \infty)$, $D_b$ is unbounded and $a < b$ then $D_a$ is also unbounded.
	\end{enumerate}
\end{lemma}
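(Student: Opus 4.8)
The plan is to reduce the comparison of the two quasinorms to a pointwise inequality between the radial non-increasing rearrangements of $D_a f$ and $D_b f$, and then to invoke rearrangement-invariance together with the lattice property \ref{P2}. The radial rearrangement $f^\star$ is exactly the tool designed for this task, since it simultaneously satisfies $(f^\star)^* = f^*$ and commutes with dilations, both of which are recorded in the excerpt.

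First I would use the two stated facts: for every $g \in M(\mathbb{R}^n, \lambda^n)$ and every $c > 0$ one has $(g^\star)^* = g^*$ and $(D_c g)^\star = D_c(g^\star)$. Because $X$ is r.i.\ and $(g^\star)^* = g^*$, the first fact gives $\lVert g \rVert_X = \lVert g^\star \rVert_X$. Applying this to $g = D_a f$ and to $g = D_b f$, and then using the commuting property, yields
\[
	\lVert D_a f \rVert_X = \lVert (D_a f)^\star \rVert_X = \lVert D_a(f^\star) \rVert_X, \qquad \lVert D_b f \rVert_X = \lVert D_b(f^\star) \rVert_X.
\]

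Next I would exploit that $f^\star$ is, by construction, radially symmetric and non-increasing in $\lvert x \rvert$. Hence for every $x \in \mathbb{R}^n$,
\[
	D_b(f^\star)(x) = f^\star(bx) \leq f^\star(ax) = D_a(f^\star)(x),
\]
since $\lvert bx \rvert = b\lvert x \rvert \geq a\lvert x \rvert = \lvert ax \rvert$ whenever $0 < a < b$. As both sides are non-negative measurable functions on $\mathbb{R}^n$, the lattice property \ref{P2} gives $\lVert D_b(f^\star) \rVert_X \leq \lVert D_a(f^\star) \rVert_X$, and combining with the previous display establishes the desired inequality $\lVert D_b f \rVert_X \leq \lVert D_a f \rVert_X$.

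Finally, the two consequences are immediate from this inequality. If $a \in (0,\infty)$ and $D_a$ is bounded, say $\lVert D_a g \rVert_X \leq M \lVert g \rVert_X$ for all $g \in X$, then for $b > a$ the main estimate gives $\lVert D_b f \rVert_X \leq \lVert D_a f \rVert_X \leq M \lVert f \rVert_X$ for every $f \in X$ (in particular $D_b f \in X$), so $D_b$ is bounded; the second assertion is simply the contrapositive. I do not expect a genuine obstacle here: the only delicate point is the bookkeeping between the dilation acting on $\mathbb{R}^n$ and its effect on the rearrangement, and this is entirely absorbed by the identity $(D_c g)^\star = D_c(g^\star)$, which is precisely why the radial rearrangement and the normalising constant $\alpha_n$ were introduced.
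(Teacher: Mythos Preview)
Your proof is correct and follows essentially the same route as the paper: both arguments use the radial rearrangement $f^\star$, invoke the identities $(g^\star)^* = g^*$ and $(D_c g)^\star = D_c(g^\star)$, reduce to the pointwise inequality $f^\star(bx) \leq f^\star(ax)$ coming from radial monotonicity, and conclude via \ref{P2} and rearrangement-invariance. The two consequences are handled identically as immediate corollaries.
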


\begin{proof}
	It follows from the monotonicity of $f^{\star}$ with respect to $\lvert x \rvert$ and the assumption $a < b$ that it holds for all $f \in M(\mathbb{R}^n, \lambda^n)$ and all $x \in \mathbb{R}^n$ that
	\begin{equation*}
		f^{\star}(bx) \leq f^{\star}(ax).
	\end{equation*}
	Because $\lVert \cdot \rVert_X$ is assumed to be rearrangement invariant the conclusion holds by the following chain of equalities and inequalities:
	\begin{equation*}
		\lVert D_bf \rVert_X = \lVert (D_bf)^{\star} \rVert_X = \lVert D_b(f^{\star}) \rVert_X \leq \lVert D_a(f^{\star}) \rVert_X = \lVert (D_af)^{\star} \rVert_X = \lVert D_af \rVert_X.
	\end{equation*}
\end{proof}

The next lemma shows that in order to prove the boundedness of $D_a$ for any $a \in (0, \infty)$ it is actually sufficient to prove it for any fixed $a \in (0, 1)$.

\begin{lemma} \label{LSup}
	Let $(X,\lVert \cdot \rVert_X)$ be an r.i.~quasi-Banach function space over $M(\mathbb{R}^n, \lambda^n)$. Set
	\begin{equation*}
		s = \sup\{a \in [0, \infty); \; \lVert D_a \rVert = \infty\},
	\end{equation*}
	where $\lVert D_a \rVert$ is the operator norm of $D_a$. Then $s$ is equal to either zero or one.
\end{lemma}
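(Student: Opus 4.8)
The plan is to combine the monotonicity of $a \mapsto \lVert D_a \rVert$, already obtained in Lemma~\ref{LMDO}, with the semigroup structure of the dilations. The second ingredient is the composition law $D_a D_b = D_{ab}$, which is immediate from $D_a(D_b f)(x) = (D_b f)(ax) = f(abx)$. Since the operator norm is submultiplicative, $\lVert D_{ab} \rVert \le \lVert D_a \rVert \, \lVert D_b \rVert$ (this uses only the defining estimate $\lVert D f \rVert_X \le \lVert D \rVert \, \lVert f \rVert_X$ and is therefore insensitive to whether $\lVert \cdot \rVert_X$ satisfies the triangle inequality or only a quasi-triangle inequality), the set $B = \{ a \in (0, \infty); \lVert D_a \rVert < \infty \}$ of bounded dilations is closed under multiplication. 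By Lemma~\ref{LMDO} it is moreover upward closed, so its complement $U = \{ a \in (0, \infty); \lVert D_a \rVert = \infty \}$ is a downward-closed interval with $s = \sup U$ (interpreting $s = 0$ when $U = \emptyset$; the index $a = 0$ plays no role and does not affect this supremum).

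First I would record that $D_1$ is the identity operator, so $\lVert D_1 \rVert = 1 < \infty$ and hence $1 \in B$. Because $B$ is upward closed, this gives $[1, \infty) \subseteq B$, whence $U \subseteq (0, 1)$ and therefore $s \le 1$. This immediately rules out any value $s > 1$, so the whole content of the lemma is to exclude the range $0 < s < 1$.

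The \emph{main point} is precisely this exclusion, and it is where the multiplicative closure of $B$ does the work. Assuming for contradiction that $0 < s < 1$, the interval $(s, 1)$ is nonempty and contained in $B$, so I may choose $a \in B$ with $s < a < 1$. Closure under multiplication yields $a^k \in B$ for every $k \in \mathbb{N}$, and since $a < 1$ we have $a^k \to 0$; thus $B$ contains points arbitrarily close to $0$, forcing $\inf B = 0$ and hence $s = 0$, contradicting $s > 0$. Combining this with $s \le 1$ gives $s \in \{0, 1\}$. I do not anticipate a genuine obstacle: the only steps needing a little care are checking that submultiplicativity of the operator norm survives the passage from norms to quasinorms (it does), and the harmless bookkeeping at the endpoints and at the degenerate index $a = 0$.
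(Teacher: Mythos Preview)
Your argument is correct and follows essentially the same route as the paper's proof: both use that $D_1$ is bounded together with Lemma~\ref{LMDO} to get $s\le 1$, and then exploit the semigroup identity $D_{ab}=D_a\circ D_b$ (hence submultiplicativity of the operator norm) to rule out $0<s<1$. The only cosmetic difference is that the paper picks a single $t\in(s,\sqrt{s})$ and squares once to land below $s$, whereas you take an arbitrary $a\in(s,1)$ and iterate powers $a^k\to 0$; the underlying mechanism is identical.
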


\begin{proof}
	Because $D_1$ is the identity operator on $M(\mathbb{R}^n, \lambda^n)$, which is bounded on $X$, it follows from Lemma~\ref{LMDO} that $s \in [0, 1]$.
	
	Assume $s \in (0, 1)$ and choose some $t \in (s, \sqrt{s})$. Then $t^2 < s < t$ and therefore we get from Lemma~\ref{LMDO} and the definition of $s$ that $D_t$ is bounded while $D_{t^2}$ is unbounded. But this is a contradiction, since $D_{t^2} = D_t \circ D_t$ and thus it must be bounded with a norm that is at most equal to $\lVert D_t \rVert^2$. 
\end{proof}

We now proceed by proving the boundedness of $D_b$ for $b = \left ( \frac{2}{3} \right )^{\frac{1}{n}}$. To this end, we will employ the four auxiliary operators defined bellow.

\begin{definition}
	Set
	\begin{align*}
		G_1 & = \bigcup_{k \in \mathbb{Z}} \left (\frac{1}{2^{2k+2}},\frac{1}{2^{2k+1}}\right), \\
		G_2 & = \bigcup_{k \in \mathbb{Z}} \left (\frac{1}{2^{2k+1}},\frac{1}{2^{2k}}\right).
	\end{align*}
	We then define the operators $R_1$, $R_2$ for functions in $f \in M((0, \infty), \lambda)$ by
	\begin{align*}
		R_1f &= f \chi_{G_1}, \\
		R_2f &= f \chi_{G_2}
	\end{align*}
	and the corresponding operators $S_1$ and $S_2$ for functions $f \in M(\mathbb{R}^n, \lambda^n)$ by
	\begin{align*}
		S_1f(x) &= [R_1f^*](\alpha_n \lvert x \rvert^n), \\
		S_2f(x) &= [R_2f^*](\alpha_n \lvert x \rvert^n)
	\end{align*}
	for all $x \in \mathbb{R}^n$, where $\alpha_n$ is the same constant as in the Definition~\ref{DRNIR} of radial non-increasing rearrangement.
\end{definition}

\begin{lemma} \label{L1D}
	Let $g \in M((0, \infty), \lambda)$. Then it holds for $i=1,2$ and almost every $t \in (0, \infty)$ that
	\begin{align*}
		&(R_ig^*)^*(t) \leq D_{\frac{3}{2}}g^*(t).
	\end{align*}
\end{lemma}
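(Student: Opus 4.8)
The plan is to reduce the claimed pointwise inequality to a purely geometric estimate on the sets $G_i$ by passing to distribution functions. Both sides are non-increasing: $(R_ig^*)^*$ is a non-increasing rearrangement by construction, while $t \mapsto D_{\frac32}g^*(t) = g^*(\frac32 t)$ is the composition of the non-increasing $g^*$ with the increasing map $t \mapsto \frac32 t$. I would invoke the standard comparison principle stating that, for two non-negative non-increasing right-continuous functions $u,v$ on $(0,\infty)$, one has $u \le v$ everywhere if and only if the distribution functions satisfy $\mu_u \le \mu_v$ pointwise; this follows at once from the formula $u(t) = \inf\{s : \mu_u(s) \le t\}$. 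Hence it suffices to compare the distribution functions of $R_ig^*$ and of $D_{\frac32}g^*$, using also that $R_ig^*$ and its rearrangement $(R_ig^*)^*$ are equimeasurable.

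Next I would compute these two distribution functions explicitly. Since $g^*$ is non-increasing, the super-level set $\{g^* > s\}$ is the interval $(0,\mu_g(s))$, so for $s>0$,
\[ \mu_{R_ig^*}(s) = \lambda\bigl(\{g^*\chi_{G_i} > s\}\bigr) = \lambda\bigl(G_i \cap (0,\mu_g(s))\bigr), \]
whereas $g^*(\frac32 t) > s$ is equivalent to $\frac32 t < \mu_g(s)$, giving $\mu_{D_{3/2}g^*}(s) = \frac23 \mu_g(s)$. Writing $\psi_i(L) = \lambda(G_i \cap (0,L))$, the lemma therefore reduces, via the comparison principle, to the single geometric inequality
\[ \psi_i(L) \le \tfrac23 L \qquad \text{for all } L > 0 \text{ and } i = 1,2, \]
since then $\mu_{R_ig^*}(s) = \psi_i(\mu_g(s)) \le \frac23\mu_g(s) = \mu_{D_{3/2}g^*}(s)$ for every $s$.

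It remains to establish the geometric inequality, which I expect to be the main and only genuinely computational part. The sets $G_1,G_2$ are self-similar under dilation by $4$, i.e.\ $4G_i = G_i$, whence $\psi_i(4L) = 4\psi_i(L)$ and the ratio $\psi_i(L)/L$ is invariant under $L \mapsto 4L$; it thus suffices to verify the bound on one fundamental domain, say $L \in (2^{-2}, 1]$, which meets exactly one constituent interval of $G_i$ and one gap. On the constituent interval $\psi_i$ increases with slope one, so after summing the geometric series formed by the smaller constituent intervals one finds $\psi_i(L) = L - c_i 4^{-k}$ there, and $\psi_i(L)/L = 1 - c_i4^{-k}/L$ is increasing and attains the value $\frac23$ exactly at the right endpoint; on the complementary gap $\psi_i$ is constant and $\psi_i(L)/L$ is decreasing, again bounded by $\frac23$. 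In both cases the bound holds, with equality precisely at the right endpoints of the intervals composing $G_i$.

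The principal delicacy is that the estimate is sharp: the ratio $\psi_i(L)/L$ genuinely equals $\frac23$ at those right endpoints (which is in fact the motivation for the exponent $\frac32$), so the geometric series must be summed exactly and the two-case split carried out with care, leaving no room for a lossy bound. Beyond this, one only needs the routine and standard justifications that $\{g^* > s\} = (0,\mu_g(s))$ and that a function and its non-increasing rearrangement share the same distribution function.
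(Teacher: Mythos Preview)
Your argument is correct and takes a genuinely different, though at its core closely related, route from the paper's.

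The paper proceeds by an explicit pointwise computation: for $x$ in a component interval $\bigl(2^{-(2k+2)},2^{-(2k+1)}\bigr)$ of $G_1$ it locates the point $t_x$ to which the rearrangement of $R_1 g^*$ carries the value $g^*(x)$, namely $t_x = x - \tfrac{2}{3}\,2^{-(2k+2)}$, and then checks that the inverse satisfies $x_t \ge \tfrac{3}{2}t$, so that $g_1^*(t)=g^*(x_t)\le g^*\bigl(\tfrac{3}{2}t\bigr)$ by monotonicity of $g^*$.

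You instead pass to distribution functions and reduce the problem to the purely geometric estimate $\psi_i(L)=\lambda\bigl(G_i\cap(0,L)\bigr)\le \tfrac{2}{3}L$, which you then verify via the self-similarity $4G_i=G_i$ on a single fundamental domain.  The two computations are in fact the same object viewed differently: the paper's $t_x$ is precisely your $\psi_1(x)$, and the paper's inequality $x_t\ge \tfrac{3}{2}t$ is exactly $\psi_1(x)\le \tfrac{2}{3}x$.  What your presentation buys is a clean separation of the measure-theoretic reduction (the comparison principle for non-increasing right-continuous functions) from the combinatorial estimate, and the self-similarity observation replaces the paper's explicit geometric series summation; it also sidesteps the mild informality in the paper's phrase ``the rearrangement shifts the value $g_1(x)$ to a point $t_x$'', which requires some care when $g^*$ has plateaus.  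The paper's approach is more hands-on and yields the explicit formula for $t_x$, which is not needed but makes the sharpness of the constant $\tfrac{3}{2}$ visible in a slightly different way.
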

\begin{proof} 
	We perform the proof only for $i=1$ because the other case is analogous.
	
	Denote $g_1 = R_1 g^*$ and fix some $k \in \mathbb{Z}$ and some $x \in \left ( \frac{1}{2^{2k+2}}, \frac{1}{2^{2k+1}} \right )$. Then the non-increasing rearrangement shifts the value $g_1(x)$ to a point $t_x$ given by
	\begin{equation*}
		t_x = x -\frac{1}{2^{2k+2}} + \sum_{l=k+1}^\infty \left ( \frac{1}{2^{2l+1}} - \frac{1}{2^{2l+2}} \right ) = x - \frac{2}{3}\frac{1}{2^{2k+2}}.
	\end{equation*}
	It is easy to see that this $t_x$ satisfies $t_x \in \left (\frac{2}{3}\frac{1}{2^{2(k+1)+1}}, \frac{2}{3}\frac{1}{2^{k+1}} \right)$. Conversely, given any $t$ in this interval, one can find an appropriate $x_t \in \left ( \frac{1}{2^{2k+2}}, \frac{1}{2^{2k+1}} \right )$ such that $t = t_{x_t}$. Note that intervals of this form cover almost every point of $(0, \infty)$. 
	
	Fix now arbitrary $t \in (0, \infty)$ such that there is some $k \in \mathbb{Z}$ satisfying $t\in \left (\frac{2}{3}\frac{1}{2^{2(k+1)+1}}, \frac{2}{3}\frac{1}{2^{2k+1}} \right )$. We have by the arguments presented above that
	\begin{equation*}
		g_1^*(t)=g^*(x_t),
	\end{equation*}
	where $x_t$ satisfies
	\begin{equation*}
		x_t=t+\frac{2}{3}\frac{1}{2^{2k+2}}=t+\frac{1}{2}\frac{2}{3}\frac{1}{2^{2k+1}}\ge t+\frac{1}{2}t=\frac{3}{2}t.
	\end{equation*}
	Since $g^*$ is non-increasing we have 
	\begin{equation*}
		g_1^*(t)=g^*(x_t)\leq D_{\frac{3}{2}}g^*(t)
	\end{equation*}
	for almost every $t \in (0, \infty)$.
\end{proof}

\begin{lemma} \label{LB}
	Let $(X,\lVert \cdot \rVert_X)$ be an r.i.~quasi-Banach function space over $M(\mathbb{R}^n, \lambda^n)$ and denote its modulus of concavity by $C$. Put $b = \left ( \frac{2}{3} \right )^{\frac{1}{n}}$. Then the dilation operator $D_b$ is bounded on $X$ and its operator norm satisfies $\lVert D_b \rVert \leq 2C$.
\end{lemma}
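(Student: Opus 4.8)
The plan is to push everything down to the level of radial non-increasing rearrangements, where the auxiliary operators $S_1,S_2$ provide a clean additive decomposition, and then to use Lemma~\ref{L1D} to control each piece against $f$ itself.

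First I would invoke rearrangement invariance together with the identity $(D_b f)^{\star} = D_b(f^{\star})$ to write
\begin{equation*}
\lVert D_b f \rVert_X = \lVert (D_b f)^{\star} \rVert_X = \lVert D_b(f^{\star}) \rVert_X.
\end{equation*}
The structural point is that $G_1$ and $G_2$ cover $(0,\infty)$ up to a null set (they are precisely the intervals $(1/2^{m+1},1/2^m)$ sorted by the parity of $m$), whence $R_1 f^* + R_2 f^* = f^*$ almost everywhere and therefore $S_1 f + S_2 f = f^{\star}$ pointwise. Since $D_b$ is linear, the quasi-triangle inequality with modulus of concavity $C$ gives
\begin{equation*}
\lVert D_b f \rVert_X = \lVert D_b(S_1 f) + D_b(S_2 f) \rVert_X \leq C\bigl( \lVert D_b(S_1 f) \rVert_X + \lVert D_b(S_2 f) \rVert_X \bigr).
\end{equation*}

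The heart of the argument is the bound $\lVert D_b(S_i f) \rVert_X \leq \lVert f \rVert_X$ for $i=1,2$. To obtain it I would first observe that $(S_i f)^* = (R_i f^*)^*$: since the map $x \mapsto \alpha_n \lvert x \rvert^n$ pushes $\lambda^n$ forward onto the Lebesgue measure on $(0,\infty)$ (which is exactly the role of the constant $\alpha_n$ fixed in \eqref{DRNIR1}), the functions $S_i f$ and $R_i f^*$ share the same distribution function and hence the same non-increasing rearrangement. Using the standard scaling identity $(D_b g)^*(t) = g^*(b^n t) = g^*\bigl(\tfrac{2}{3}t\bigr)$, which follows from $\mu_{D_b g} = b^{-n}\mu_g$, together with Lemma~\ref{L1D}, I then compute for almost every $t \in (0,\infty)$ that
\begin{equation*}
\bigl(D_b(S_i f)\bigr)^*(t) = (R_i f^*)^*\bigl(\tfrac{2}{3}t\bigr) \leq D_{\frac{3}{2}} f^*\bigl(\tfrac{2}{3}t\bigr) = f^*\bigl(\tfrac{3}{2}\cdot\tfrac{2}{3}t\bigr) = f^*(t).
\end{equation*}
Thus $\bigl(D_b(S_i f)\bigr)^* \leq f^*$ pointwise, and rearrangement invariance combined with the lattice property~\ref{P2} yields $\lVert D_b(S_i f) \rVert_X \leq \lVert f \rVert_X$. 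Feeding this back into the previous display produces $\lVert D_b f \rVert_X \leq 2C\lVert f \rVert_X$, i.e.\ $\lVert D_b \rVert \leq 2C$.

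The step I expect to demand the most care is the identity $(S_i f)^* = (R_i f^*)^*$ and, hand in hand with it, the precise bookkeeping that makes the two dilation factors cancel: it is the exact matching of $b^n = \tfrac{2}{3}$ against the factor $\tfrac{3}{2}$ furnished by Lemma~\ref{L1D} that collapses the chain back to $f^*(t)$, and lining up the composition of rearrangement, dilation and the covering geometry of $G_1, G_2$ is where an error would most easily slip in. The remaining steps are routine uses of rearrangement invariance, the lattice property and the quasi-triangle inequality.
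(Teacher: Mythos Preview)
Your proof is correct and follows essentially the same approach as the paper: both rely on the decomposition $f^{\star}=S_1f+S_2f$, the identity $(S_if)^*=(R_if^*)^*$, and the estimate of Lemma~\ref{L1D} to make the factor $b^n=\tfrac{2}{3}$ cancel against the $\tfrac{3}{2}$ there. The only cosmetic difference is the order of operations---the paper decomposes $(D_bf)^{\star}$ as $S_1(D_bf)+S_2(D_bf)$ and then bounds $(S_i(D_bf))^{\star}\le D_{b^{-1}}(D_bf)^{\star}=f^{\star}$, whereas you decompose $f^{\star}$ first and then dilate, using the scaling identity $(D_bg)^*(t)=g^*(b^nt)$ to reach $(D_b(S_if))^*\le f^*$; the underlying mechanism is identical.
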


\begin{proof}
	The operators $S_1$ and $S_2$ were defined in such a way that they satisfy, for any $f \in M(\mathbb{R}^n, \lambda^n)$, the following two crucial properties:
	\begin{align}
		f^{\star} &= S_1 f + S_2 f & \textup{$\lambda^n$-a.e.~on $\mathbb{R}^n$,} \label{LB1}	\\
		(S_i f)^* &= (R_i f^*)^* & \textup{on $\mathbb{R}$ for $i = 1, 2$.} \label{LB2}
	\end{align}
	While \eqref{LB1} is obvious directly from the respective definitions, the property \eqref{LB2} is more involved, as it follows from examination of the level sets $\left \{ S_if > t  \right \}$ and $\left \{ R_i f^* > t  \right \}$ that uses both their structure and the fact that the preimage of an interval in $(0, \infty)$ with respect to the mapping $x \mapsto \alpha_n \lvert x \rvert^n$ is an annulus centered at zero whose $n$-dimensional measure is equal to the length of the original interval. We leave out the details.
	
	Using Lemma~\ref{L1D} (with $g=f^*$) in combination with \eqref{LB2} we now obtain for $i = 1, 2$ and almost every $x \in \mathbb{R}^n$ the following:
	\begin{equation} \label{LB3}
	(S_i f)^{\star} (x) = (S_if)^*(\alpha_n \lvert x \rvert^n) = (R_i f^*)^*(\alpha_n \lvert x \rvert^n) \leq (D_{\frac{3}{2}}f^*)(\alpha_n \lvert x \rvert^n) = D_{b^{-1}} f^{\star}(x).
	\end{equation}
	
	We are now suitably equipped to prove the boundedness of $D_b$, since we may obtain from \eqref{LB1} and \eqref{LB3} that it holds for all $f \in M(\mathbb{R}^n, \lambda^n)$ that
	\begin{equation*}
		\begin{split}
			\lVert D_b f \rVert_X &= \lVert (D_b f)^{\star} \rVert_X = \\
			&= \lVert S_1(D_b f) + S_2(D_b f) \rVert_X \leq \\
			&\leq C (\lVert S_1(D_b f) \rVert_X + \lVert S_2(D_b f) \rVert_X) = \\
			&= C (\lVert (S_1(D_b f))^{\star} \rVert_X + \lVert (S_2(D_b f))^{\star} \rVert_X) \leq \\
			&\leq C (\lVert D_{b^{-1}}(D_b f)^{\star} \rVert_X + \lVert D_{b^{-1}}(D_b f)^{\star} \rVert_X) = \\
			&= 2C \lVert f^{\star}  \rVert_X = \\
			&= 2C \lVert f \rVert_X.
		\end{split}
	\end{equation*}
\end{proof}

The desired boundedness of $D_a$ for any $a \in (0, \infty)$ now follows by combining the previous results. Moreover, we also obtain an upper bound on the operator norm of $D_a$.

\begin{theorem} \label{TBD}
	Let $(X,\lVert \cdot \rVert_X)$ be an r.i.~quasi-Banach function space over $M(\mathbb{R}^n, \lambda^n)$. Denote its modulus of concavity by $C$. Then the dilation operator $D_a$ is bounded on $X$ for any $a \in (0, \infty)$  and its operator norm satisfies
	\begin{align*}
		\lVert D_a &\rVert \leq
		\begin{cases}
			2C a^{\frac{\log(2C)}{\log(b)}} & \textup{for $a \in (0,1)$}, \\
			1 &\textup{for $a \in [1, \infty)$},
		\end{cases}
	\end{align*}
	where $b$ is as in Lemma~\ref{LB}, that is, $b = \left (\frac{2}{3} \right)^{\frac{1}{n}}$.
\end{theorem}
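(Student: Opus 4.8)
The plan is to reduce the whole statement to the single estimate $\lVert D_b \rVert \leq 2C$ supplied by Lemma~\ref{LB}, exploiting the semigroup structure of the dilations together with the monotonicity recorded in Lemma~\ref{LMDO}. The key structural fact is the elementary identity $D_x \circ D_y = D_{xy}$, valid for all $x, y \in (0, \infty)$. From it one obtains submultiplicativity of the operator norms, which holds verbatim in the quasinormed setting since $\lVert D_x(D_y f) \rVert_X \leq \lVert D_x \rVert \lVert D_y f \rVert_X \leq \lVert D_x \rVert \lVert D_y \rVert \lVert f \rVert_X$; iterating gives $\lVert D_{b^k} \rVert \leq \lVert D_b \rVert^k \leq (2C)^k$ for every $k \in \mathbb{N}$.

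First I would dispose of the range $a \in [1, \infty)$. Since $D_1$ is the identity operator and hence has operator norm $1$, Lemma~\ref{LMDO} applied to the pair $1 \leq a$ yields $\lVert D_a f \rVert_X \leq \lVert D_1 f \rVert_X = \lVert f \rVert_X$ for all $f$, so that $\lVert D_a \rVert \leq 1$, which is the claimed bound on this interval.

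For the main range $a \in (0, 1)$ the idea is to interpolate logarithmically between the powers of $b$. Recalling that $b = (2/3)^{1/n} \in (0,1)$, I would set $k = \lfloor \log_b a \rfloor$; since both $a$ and $b$ lie in $(0,1)$ one has $\log_b a > 0$, hence $k \geq 0$ and $b^{k+1} < a \leq b^k$. This forces $a/b^k \in (b, 1]$, so by the monotonicity of Lemma~\ref{LMDO} (as $a/b^k > b$) combined with Lemma~\ref{LB} one gets $\lVert D_{a/b^k} \rVert \leq \lVert D_b \rVert \leq 2C$. Factoring $D_a = D_{b^k} \circ D_{a/b^k}$ and invoking submultiplicativity then gives
\begin{equation*}
	\lVert D_a \rVert \leq \lVert D_{b^k} \rVert \, \lVert D_{a/b^k} \rVert \leq (2C)^k \cdot 2C = (2C)^{k+1}.
\end{equation*}

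It remains to convert this into the stated closed form. Because $2C \geq 1$ and $k \leq \log_b a$, we have $(2C)^{k+1} \leq (2C)^{\log_b a + 1}$, and rewriting $(2C)^{\log_b a} = (2C)^{\log a/\log b} = a^{\log(2C)/\log b}$ produces exactly $\lVert D_a \rVert \leq 2C\, a^{\log(2C)/\log(b)}$. I do not expect any genuine obstacle: the substantive analytic content has already been absorbed into the proof of $\lVert D_b \rVert \leq 2C$ in Lemma~\ref{LB}, and what remains is bookkeeping. The only point requiring care is matching the integer exponent $k+1$ against the real exponent $1 + \log_b a$, which is precisely what the choice $k = \lfloor \log_b a \rfloor$ secures.
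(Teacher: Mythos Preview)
Your proposal is correct and follows essentially the same route as the paper: reduce to the bound $\lVert D_b\rVert\leq 2C$ from Lemma~\ref{LB}, handle $a\geq 1$ via Lemma~\ref{LMDO} and $D_1=\mathrm{id}$, and for $a\in(0,1)$ trap $a$ between consecutive powers $b^{k+1}$ and $b^k$ to obtain $\lVert D_a\rVert\leq(2C)^{k+1}$, then rewrite this as $2C\,a^{\log(2C)/\log b}$. The only cosmetic difference is that the paper bounds $\lVert D_a\rVert\leq\lVert D_{b^{k+1}}\rVert$ directly by monotonicity, whereas you factor $D_a=D_{b^k}\circ D_{a/b^k}$ and bound each piece; both arrive at the same exponent $k+1$, and neither actually needs Lemma~\ref{LSup} once the explicit estimate is in hand.
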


\begin{proof}
	The boundedness follows directly from Lemma~\ref{LB} and Lemma~\ref{LSup} while the estimate on the norm in case $a \in [1, \infty)$ follows from Lemma~\ref{LMDO} and the fact that $D_1$ is the identity operator on $M(\mathbb{R}^n, \lambda^n)$. It thus remains only to prove the estimate for $a \in (0,1)$.
	
	Let $a \in (0,1)$ and fix $k \in \mathbb{N}$ such that
	\begin{equation*}
		a \in \left [ b^{k+1},  b^k \right ).
	\end{equation*}
	It then follows from Lemma~\ref{LMDO} and Lemma~\ref{LB} that
	\begin{equation*}
		\lVert D_a \rVert \leq \lVert D_{b^{k+1}} \rVert \leq \lVert D_b \rVert^{k+1} \leq (2C)^{k+1} \leq 2C a^{\frac{\log(2C)}{\log(b)}}.
	\end{equation*}
\end{proof}

\bibliographystyle{abbrv}
\bibliography{bibliography}

\end{document}